\newtheorem{lem}{Lemma}
\newtheorem{thm}{Theorem}
\newtheorem{prop}{Proposition}
\theoremstyle{definition}
\theoremstyle{remark}
\begin{document}

\title[Meridional surfaces in hyperbolic knot exteriors]{Infinitely many meridional essential surfaces of bounded genus in hyperbolic knot exteriors}

\author{Jo\~{a}o Miguel Nogueira}
\address{University of Coimbra, CMUC, Department of Mathematics,  Apartado 3008, 3001-454 Coimbra, Portugal\\
nogueira@mat.uc.pt}
\thanks{This work was partially supported by the Centre for Mathematics of the University of Coimbra - UIDB/00324/2020, funded by the Portuguese Government through FCT/MCTES. This work was also partially suported by the UTAustin$|$Portugal program CoLab.}



\maketitle

\begin{abstract}
We show the existence of an infinite collection of hyperbolic knots where each of which has in its exterior meridional essential planar surfaces of arbitrarily large number of boundary components, or, equivalently, that each of these knots has essential tangle decompositions of arbitrarily large number of strings. Moreover, each of these knots has in its exterior meridional essential surfaces of any positive genus and (even) number of boundary components. That is, the compact surfaces that have a meridional essential embedding into a hyperbolic knot exterior have meridional essential embeddings into each of these hyperbolic knots exteriors.
\end{abstract}

\section{Introduction}

In geometric topology it is a common approach to use codimension one objects, and the resulting decompositions, to study manifold topology. On the study of 3-manifolds, and knot exteriors in particular, since the work of Haken and Waldhausen, it is common to study their topology through decompositions along embedded surfaces. A very important class of surfaces used in these decompositions are embedded essential surfaces, which has motivated research on the properties and existence of these embeddings. A particularly interesting phenomena is the existence of knots with the property that their exteriors have properly embedded essential surfaces of arbitrarily high Euler characteristics. The first examples of knots with this property were given by Lyon \cite{Lyon}, where he proves the existence of fibered knot exteriors each of which with closed essential surfaces of arbitrarily high genus. Later, other contributions by Oertel \cite{Oertel}, Gustafson \cite{G-94}, Ozawa and Tsutsumi \cite{OT-03}, Eudave-Mu\~noz and Neumann-Coto \cite{EN-04}, Li \cite{Li} and more recently Lopez-García \cite{L-15}, for instance, also gave examples of knots having closed essential surfaces of arbitrarily high genus in their exteriors. These results contrast with the number of non-isotopic closed essential surfaces that are acylindrical, that is with no essential annulus in its exterior, in a 3-manifold being finite, as proved by Hass in \cite{H-95}.\\

\noindent
In the mentioned examples the Euler characteristics of the collection is unbounded because of the increasing genus. Along these lines, in \cite{N-15} we have shown that such a collection of surfaces can be of arbitrarily high genus and two boundary components on a prime knot exterior.  We have also proved that a collection of compact surfaces embedded in a particular knot exterior can have arbitrarily large Euler characteristics due to the number of boundary components and not the genus. In fact,  in \cite{N-16} it was shown instead that a particular knot exterior can have essential tangle decompositions of any number of strings, that is, they have meridional planar essential surfaces of any even number of boundary components. This is somewhat surprising as in Proposition 2.1 of \cite{MT-08}, Mizuma and Tsutsumi proved that for a given knot the number of strings in essential tangle decompositions without parallel strings is bounded. Moreover, in \cite{N-18} we use the results from \cite{N-15} and ideas from \cite{N-16} to actually show the existence of knot exteriors where each of which has meridional essential surfaces of any (even) number of boundary components and genus. Hence, all compact surfaces that  have a meridional essential embedding into a knot exterior have one into each knot exterior of \cite{N-18}. 
However, these collections of knot exteriors with meridional essential surfaces of arbitrarily high number of boundary components are not of hyperbolic knot exteriors.\\ 

\noindent For a hyperbolic knot exterior in particular, it is a result attributed to Haken that it cannot have infinitely many surfaces of uniformly bounded Euler characteristics. (See also \cite{JO-84} by Jaco and Oertel, or \cite{H-95} by Hass.) Without a bound on the Euler characteristics, among the collections of knots with meridional essential surfaces in their exterior with arbitrarily large Euler characteristics, a result in  \cite{N-18}  includes hyperbolic knots exteriors in some 3-manifold where the unbounded Euler characteristics of surfaces in the collection comes independently from both the number of boundary components and the genus, but the genus is always higher than zero and the base 3-manifold not necessarily $S^3$. In this paper we  consider the question whether a  hyperbolic knot in the 3-sphere can also have a set of meridional  planar essential surfaces of unbounded Euler characteristics properly embedded in its exterior, that is, with arbitrarily high number of boundary components. In other words, we study if a single hyperbolic knot can have essential tangle decompositions of any number ($\geq 2$) of strings as in \cite{N-15}. We also consider the more general question if the arbitrarily large Euler characteristics in the collection of meridional essential surfaces in a single hyperbolic knot exterior can be independently due from both the genus and the number of boundary components. We prove the existence of a collection of hyperbolic knot exteriors where each of which has in its exterior all possible meridional essential surfaces. That is, the compact surfaces that have a meridional essential embedding into a hyperbolic knot exterior have meridional essential embeddings into each of these hyperbolic knots exteriors. The main results of this paper are summarized in the following theorem.

\begin{thm}\label{main}
There are infinitely many hyperbolic knots in the 3-sphere each of which having in its exterior
\begin{itemize} 
\item[(a)] a meridional essential planar surface with $2n$ boundary components for any integer $n\geq 2$;
\item[(b)] a meridional essential surface of any positive genus with $2n$ boundary components for any positive integer $n$.
\end{itemize}
\end{thm}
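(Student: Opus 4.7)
My plan is to build on the constructions in \cite{N-15}, \cite{N-16}, and especially \cite{N-18}, which already exhibit knot exteriors containing meridional essential surfaces of the required types, and to upgrade these exteriors to hyperbolic ones while preserving the essentiality of all the surfaces. A natural way to implement this is to realize each candidate knot as a tangle sum $(B_1,t_1)\cup_S(B_2,t_2)$ along a decomposing sphere $S$, where $(B_1,t_1)$ is a tangle taken from \cite{N-18} whose exterior already carries the full family of meridional essential surfaces of parts (a) and (b), and $(B_2,t_2)$ is a carefully chosen "hyperbolic" tangle whose role is to eliminate the essential tori, annuli, and meridional disks that made the knots of \cite{N-18} non-hyperbolic.

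I would then proceed in three steps. First, I would isolate from \cite{N-18} a tangle $(B_1,t_1)$ whose tangle exterior contains meridional essential surfaces of every required genus and even number of boundary components. Second, I would construct an infinite family of tangles $(B_2^{(k)},t_2^{(k)})$, for instance obtained by applying $k$ Dehn twists to a fixed sufficiently complicated tangle along a properly embedded disk or annulus, whose exteriors are themselves hyperbolic with meridional (and sufficiently "rigid") boundary pattern. Third, I would verify hyperbolicity of the glued knot exterior via Thurston's hyperbolization criterion: irreducibility and boundary-irreducibility follow from those of the summands together with an innermost-disk argument on $S$, while atoroidality and anannularity are obtained by a standard cut-and-paste argument reducing any hypothetical essential torus or annulus to one living entirely in one of the two tangle exteriors, contradicting the properties chosen for $(B_1,t_1)$ and $(B_2^{(k)},t_2^{(k)})$.

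The main technical obstacle is the essentiality-preservation step: the meridional surfaces inherited from $(B_1,t_1)$ must remain incompressible and boundary-incompressible in the new knot exterior. A compressing or boundary-compressing disk $D$ in the glued exterior would, after putting $D$ in general position with respect to $S$ and eliminating innermost circles and outermost arcs of $D\cap S$, yield either such a disk inside $(B_1,t_1)$ itself (contradicting essentiality in \cite{N-18}) or a compressing disk for $S$ in the exterior of $(B_2^{(k)},t_2^{(k)})$ (contradicting boundary-irreducibility of the chosen hyperbolic tangle). The delicate point is that the cut-and-paste must avoid creating boundary-parallel components, which is exactly where the "sufficiently complicated" hypothesis on $(B_2^{(k)},t_2^{(k)})$ is used.

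Finally, to produce infinitely many pairwise distinct hyperbolic knots satisfying both (a) and (b), I would invoke Thurston's hyperbolic Dehn surgery theorem, or more directly a hyperbolic volume argument, to show that as the twisting parameter $k$ grows the resulting knots have strictly increasing complexity and thus only finitely many can coincide; passing to an infinite subsequence then yields the desired collection. In summary, the scheme is: take the non-hyperbolic examples of \cite{N-18}, glue on a parameterized hyperbolic tangle, check via standard 3-manifold surgery arguments that hyperbolicity is gained and essentiality is preserved, and extract an infinite subfamily of distinct hyperbolic knots.
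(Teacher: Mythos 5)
Your proposal has a genuine gap at its foundation, namely Step 1. You assume you can ``isolate from \cite{N-18} a tangle $(B_1,t_1)$ whose tangle exterior contains meridional essential surfaces of every required genus and even number of boundary components,'' but \cite{N-18} provides no such tangle, and producing one is essentially the whole difficulty of the theorem. In \cite{N-16} and \cite{N-18} the planar surfaces with many boundary components arise from the global structure of the knot (nested tubings that run along the knot and across any decomposing sphere), not from a single ball meeting the knot in two arcs; and the knots there are non-hyperbolic precisely because of essential tori and annuli that travel with that structure. This also undercuts your claim that the glued-on tangle $(B_2^{(k)},t_2^{(k)})$ ``eliminates'' the offending tori: an essential torus contained in the interior of $B_1$ remains incompressible and non-peripheral after gluing along an essential Conway sphere (an innermost-circle argument on $\partial B_1$ shows a compressing disk could be pushed off it), so hyperbolicity cannot be recovered by modifying the other side. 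You would therefore need $(B_1,t_1)$ to be simultaneously atoroidal, anannular in the relevant sense, \emph{and} to carry all the surfaces of (a) and (b) --- an object you neither construct nor cite.

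The paper resolves exactly this tension by \emph{not} putting all the surfaces in one tangle. It builds an atoroidal essential $2$-string tangle $(B;s\cup t)$ carrying only the surfaces of each positive genus with \emph{two} boundary components (Lemma \ref{lemma:surfaces in tangle}), glues it to a second essential atoroidal tangle so that Soma's theorem gives hyperbolicity (Lemma \ref{lemma:hyperbolic}), and then manufactures the surfaces with $2(n+j)$ boundary components \emph{globally}: it assembles an incompressible branched surface $R$ without Reeb components from the surface $F$, annuli in $\partial N(K)$, and a punctured disk in the other tangle, and reads off the surfaces $F_j$ as those carried by $R$ with suitable invariant measures, with essentiality coming from Oertel's theorem (Propositions \ref{proposition:incompressible} and \ref{proposition:Sg}). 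This branched-surface mechanism, which lets the surfaces wind along the knot and across the decomposing sphere, is the key idea your outline is missing; the remaining ingredients of your plan (innermost/outermost arguments for essentiality across the sphere, and extracting infinitely many distinct knots --- which the paper does via Kneser--Haken finiteness rather than volumes) are standard and broadly parallel to what the paper does.
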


\noindent 
The knots of Theorem \ref{main} also have in their exteriors closed essential surfaces of any genus greater than or equal to two. This follows from Theorem 2.0.3 of \cite{CGLS}, or the handle addition lemma \cite{CG-87}, applied to the surfaces with two boundary components in Theorem \ref{main}. Note that the  knot exterior of $8_{16}$ \cite{KnotInfo}, which is an alternating and hyperbolic knot, also has meridional  essential surfaces with two boundary components and any positive genus, together with the corresponding swallow-follow closed surfaces. This derives from $8_{16}$'s 2-string essential tangle decomposition and \cite{L-15}, but it does not have meridional essential planar surfaces of arbitrarily high number of boundary components as its essential tangle decomposition is unique \cite{Oz-98}.\\

\noindent 
The statement of Theorem \ref{main}  also complements and contrasts with the result of Oertel \cite{O-02}, extending work attributed by Oertel to Jaco and Sedgwick, saying in particular that a knot exterior without essential genus one surfaces (with or without boundary) has at most finitely many isotopy classes of compact essential surfaces of uniformly bounded genus. In Theorem \ref{main} we show that each hyperbolic knot exterior of the statement has infinitely many essential surfaces of any genus, and hence has infinitely many isotopy classes of surfaces of bounded genus. Therefore, from the above mentioned Oertel's result, these knot exteriors have necessarily some essential genus one surface (with boundary for the knots being hyperbolic). Such essential genus one surface is  already observed for the knots in Theorem \ref{main}(b). The result of this paper also complement the results of Menasco in \cite{M-84} stating that non-split prime alternating knots (which are hyperbolic excluding the alternating torus knots, as proved in the cited paper) have at most finitely many meridional essential meridionally-incompressible surfaces of uniformly bounded genus. It is relevant to note that all surfaces but the 4-punctured sphere as in the statement of Theorem \ref{main} are meridionally-compressible.
In the opposite direction of Theorem \ref{main} we know that: small knot exteriors \cite{CGLS} have no meridional essential surfaces; tunnel number one knot exteriors \cite{Gordon-Reid} and free genus one knot exteriors \cite{MO-98} have no meridional essential planar surfaces. There are also knots with an unique essential tangle decomposition \cite{Oz-98}.\\ 

\noindent The paper is organized as follows: In Section \ref{section:tangle with surfaces} we show the existence of atoroidal 2-string essential tangles with meridional essential surfaces of any positive genus and two boundary components. These tangles are a base  to construct  in Section \ref{section:hyperbolic knots} a collection of hyperbolic knots which we use to prove Theorem \ref{main} in Section \ref{section:surfaces} recurring to branched surface theory. Throughout this paper all manifolds are orientable, all submanifolds are assumed to be in general position and we work in the smooth category. We use $N(X)$ to denote a regular neighborhood of $X$, and $|X|$ to denote the number of components of $X$.

\section{Twice punctured meridional essential surfaces in 2-string tangle exteriors}\label{section:tangle with surfaces}

In this section we will show the existence of a 2-string essential tangle with meridional essential surfaces of any genus and two boundary components.\\
Let $J$ be an hyperbolic knot with a 2-string essential tangle decomposition with tangles $(Q_+; u_+\cup v_+)$ and $(Q_-; u_-\cup v_-)$, defined by the sphere $Z$. From Theorem 1.2 of \cite{L-15}, and its proof, there are meridional essential surfaces of any positive genus and two boundary components in its exterior. The surfaces are as follows: there are two twice-punctured swallow-follow essential tori with meridional boundary, obtained from $Z$, one in each tangle of the decomposition, denoted $F^+$ and $F^-$. Suppose $F^\pm$ is in $Q_\pm$ (where $\pm$ denotes $+$ or $-$, respectively). The boundary components of  $F^\pm$ are on the same string, say $u_\pm$. Each surface $F^\pm$ has accidental peripheral with meridional slope, that is, it has an essential non-boundary parallel simple closed curve co-bounding an annulus with a meridian of $\partial N(J)$ in $E(J)-F^\pm$, which is referred as \textit{meridional accidental annulus}. A meridional accidental annulus $A$ of $F^+$ associated with the string $v_+$ is \textit{not centered}. That is, by numbering the boundary components of $F^+\cup F^-$ from $\partial A\cap N(J)$ in one direction by $\{1, 2\}$ and the other direction as $\{-1, -2\}$, the boundary components with symmetric numbering, \textit{e. g.} 1 and $-1$, don't belong to the same surface $F^\pm$. (See \cite{L-15}.)\\

Continuing as in \cite{L-15}, we take $n$ parallel copies of $F^+$, $F^+_i$ for $i=1, \ldots, n$, and one copy of $F^-$ and glue them through nested tubing operations, connecting their boundary components following the numeration from $\partial A\cap N(J)$ in both directions of $\partial N(J)$: $\{1, \ldots, k\}$ and $\{-1, \ldots, -k\}$, respectively.  
So, the boundary component $i$ is connected to the boundary component $-i$, for $i={1, \ldots, k-1}$ in this order. The resulting surface $F$ has genus $n+1$ and two meridional boundary components, the boundary components $k$ and $-k$, and is essential in the exterior of $J$. (See \cite{L-15} for more details.)

\begin{figure}[htbp]

\labellist
\small \hair 2pt

\scriptsize
\pinlabel $F^+_1$ at 125 38 
\pinlabel $F^+_2$ at 125 21 
\pinlabel $F^-$ at 185 21 
\pinlabel $Q_+$ at 125 90
\pinlabel $Q_-$ at 30 110
\pinlabel $J$ at 30 53

\pinlabel $1$ at 182 86
\pinlabel $2$ at 152 86
\pinlabel $3$ at 98 86
\pinlabel $-1$ at 48 53
\pinlabel $-2$ at 47 86
\pinlabel $-3$ at 62 86

\endlabellist

\centering
\includegraphics[width=0.65\textwidth]{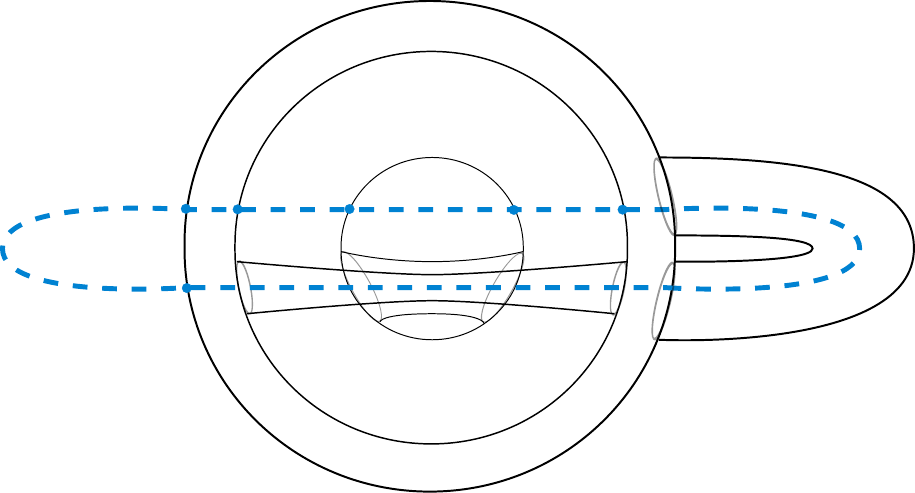}

\caption{: A schematic representation of the surfaces $F^+_1$ and $F^+_2$ in $Q_+$, the surface $F^-$ in $Q_-$, and the numbering of the boundary components of $F^+_1\cup F^+_2\cup F^-$ from a meridional accidental annulus $A$ of $F^+$ in $Q_+$.}
\label{figure:SurfaceF1F2}
\end{figure}

If we order the copies of $F^+$ on this construction from the inside of $Q_+$ to the outside, we have that $k$ belongs to $F^+_1$ and $-k$ to $F^+_2$. (See Figure \ref{figure:SurfaceF1F2}.) Let us consider the surface $S$ bounding a ball $B'$ intersecting $u_+$ at a trivial arc between the points associated with $k$ and $-k$, and the other ball bounded by $S$ let us denote it by $B$, as illustrated schematically in Figure \ref{figure:2tangle}. Denote the string $J-B'\cap u_+$ in $B$ by $s$. Let $Z_i$, for $i=1, 2$, be the copy of $Z$ corresponding to $F_i^+$. Let us consider the space $G$  between $Z_1$ and $Z_2$, and by $G_B$ the intersection $B\cap G$. The space $u_+\cap G_B$ consists of three components. Denote the arc component of $u_+\cap G_B$ connecting $S$ and $Z_1$ by $a_+$ and the arc component of $u_+\cap G_B$ disjoint from $S$ by $a_+'$. Let $t$ be a properly embedded arc in $B$ that lies in the interior of $G_B$. We choose $t$ so that it is trivial in the exterior of $a_+$ in $G_B$ and co-bounds a disk in $G_B$ with an arc in $S$ that intersects $a_+'$ once, being disjoint from $s$ and $F^+_i$, for $i=1,2$, otherwise. Consider the tangle $(B; s\cup t)$. (See Figure \ref{figure:2tangle}.)

\begin{figure}[htbp]

\labellist
\small \hair 2pt

\scriptsize
\pinlabel $F$ at 185 21 
\pinlabel $G_B$ at 150 35 

\pinlabel $S$ at 80 73
\pinlabel $t$ at 145 110
\pinlabel $s$ at 125 78

\pinlabel $B$ at 30 30

\endlabellist

\centering
\includegraphics[width=0.7\textwidth]{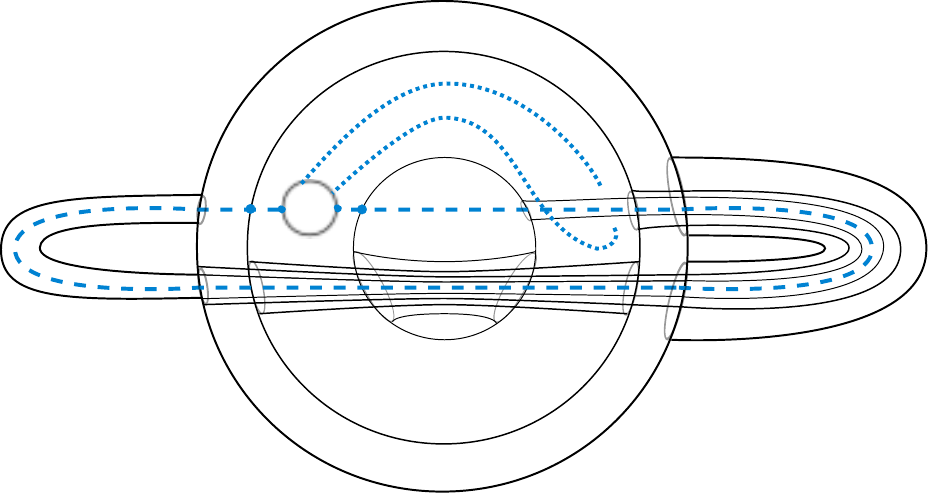}
\caption{: A schematic illustration of the tangle $(B; s\cup t)$ and the surface $F$ in the exterior of of $s\cup t$ in $B$, and the space $G_B$.}
\label{figure:2tangle}
\end{figure}

\begin{lem}\label{lemma:surfaces in tangle}
The tangle $(B; s\cup t)$ is essential, atoroidal, and each surface $F$ is essential in the exterior of $s\cup t$ in $B$.
\end{lem}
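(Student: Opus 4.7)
The plan is to verify all three assertions --- essentiality of $F$, essentiality of the tangle $(B;s\cup t)$, and atoroidality --- by reducing each to properties already known for $E(J)$ and for the surfaces $F^{\pm}$ from \cite{L-15}, tracking carefully how removing the sub-ball $B'$ and adding the arc $t$ can affect essential surfaces and curves. The governing geometric observations throughout are that $F\subset B$, that $F\cap t=\emptyset$ (since $t$ lies in the interior of $G_B$ and was chosen disjoint from the $F^+_i$), and that $F$ is already essential in $E(J)$ by construction.

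First I would verify that $F$ is essential in $X := B - N(s\cup t)$. Any compressing disk $D\subset X$ lies in $S^3\setminus N(J)$ (since $D\cap J = D\cap s=\emptyset$) and hence is a compressing disk for $F$ in $E(J)$, contradicting essentiality there. A $\partial$-compression along $\partial N(t)$ is impossible because $\partial F$ is disjoint from $\partial N(t)$. A $\partial$-compression whose arc $\beta$ lies on $S - N(s\cup t)$ can be capped off by a disk in the $3$-ball $B' - N(J\cap B')$ (a ball because $J\cap B'$ is a trivial arc), yielding a $\partial$-compression of $F$ in $E(J\cup t)$ along $\partial N(J)$; since $t$ is disjoint from $F$, this is also a $\partial$-compression of $F$ in $E(J)$. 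The standard meridional argument rules out $\partial$-compressions along $\partial N(s)$, and the positive genus of $F$ prevents $\partial$-parallelism.

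For tangle essentiality and atoroidality, the plan is to take a hypothetical compressing disk $D$ for the $4$-holed sphere $\Sigma = S - N(s\cup t)$, or a hypothetical essential torus $T\subset X$, put it into general position with respect to $F$, and minimize $|D\cap F|$ (respectively $|T\cap F|$). Standard innermost-disk arguments, combined with the essentiality of $F$ established above and the irreducibility of $S^3$, then force $D\cap F = \emptyset$ (respectively $T\cap F = \emptyset$). Each complementary region of $F$ in $X$ can be identified with a piece of $Q_+$ or $Q_-$ cut by the copies of $F^{\pm}$ (with $B'$ removed, and with $t$ added in exactly one region). The hyperbolicity of $J$ and the essentiality of the original tangle decomposition $(Q_\pm;u_\pm\cup v_\pm)$ dispose of the regions not containing $t$; in the region containing $t$, the particular placement --- trivial in $G_B\setminus a_+$ and co-bounding a disk in $G_B$ with an arc in $S$ crossing $a_+'$ once --- obstructs $D$ from separating the strings $2+2$ and $T$ from being essential.

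The hard part will be this last case analysis: precisely describing the complementary region of $F$ containing $t$ as a sub-tangle exterior, and using the linking disk between $t$ and $a_+'$ to rule out the remaining compressions and essential tori. Verifying that the $a_+'$-linking disk genuinely obstructs every would-be $(2,2)$-splitting and every essential torus in that region --- rather than simply permitting them to be isotoped away --- is the technical heart of the lemma.
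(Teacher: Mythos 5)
Your first step (essentiality of $F$ in $E_B$) is sound and matches the paper's one\-/line argument: a compression or $\partial$-compression of $F$ in $E_B$ extends across the exterior of the trivial arc $J\cap B'$ to one in $E(J)$, contradicting the essentiality of $F$ there established in \cite{L-15}. Two small quibbles: $B'-N(J\cap B')$ is a solid torus, not a $3$-ball, and ``positive genus prevents $\partial$-parallelism'' needs a word more care for genus $2$; but these are repairable.

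The genuine gap is in the second and third assertions, where you have written a plan rather than a proof. You correctly identify the disk dual to $t$ (the one co-bounded by $t$ and an arc of $S$ meeting $a_+'$ once) as the obstruction, but you then explicitly defer ``verifying that the $a_+'$-linking disk genuinely obstructs every would-be $(2,2)$-splitting and every essential torus,'' which \emph{is} the lemma. Concretely, the missing mechanism (the one the paper supplies) is: after isotoping a compressing disk $D$ for $S-N(s\cup t)$ to lie in the product-like region $G_B$, either $D$ is disjoint from the disk $E$, in which case $D$ together with a disk in $S$ bounds a ball in $G_B$ containing $E$ but not $a_+'$, contradicting $E\cap a_+'=\{p\}$; or an innermost disk of $E\cap D$ in $E$ containing $p$ closes up to a sphere meeting $J$ transversely once, contradicting Schoenflies. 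For atoroidality one uses hyperbolicity of $J$ to get a compressing disk $D_T$ for $T$ in $B-s$ and then the triviality of $t$ rel $a_+$ to produce a disk $O$ along which $D_T$ can be made disjoint from $t$. None of this is in your write-up. A secondary problem is your choice of auxiliary surface: you isotope $D$ and $T$ off $F$ and claim each complementary region ``can be identified with a piece of $Q_+$ or $Q_-$ cut by the copies of $F^{\pm}$.'' That is not accurate: $F$ is obtained by nested tubing, so its complementary regions are unions of such pieces glued along tubing annuli in $\partial N(J)$, and in particular the region containing $t$ and $S$ is not $G_B$. The paper instead isotopes $D$ and $T$ off the copies $Z_1$, $Z_2$ of the decomposing sphere, which is what makes the region containing $t$ tractable; with your choice the ``last case analysis'' you postpone would be substantially harder than you suggest.
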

\begin{proof}
First we observe that each surface $F$ is essential in the exterior of $s\cup t$ in $B$, denoted $E_B$; otherwise, as the arc $B'\cap J$ is a trivial single arc in $B'$, it would be inessential in the exterior of $J$, which is a contradiction.\\
Now we show that the tangle $(B; s\cup t)$ is essential. Suppose, by contradiction, that it is not, and $D$ is a compressing disk for $S-s\cup t$ in $B-s\cup t$. As $Z_1$ and $Z_2$ are essential in $E_B$ we have that $D$ is disjoint from $Z_1$ and $Z_2$: If $D$ intersects $Z_i$ it intersects it in simple closed curves each of which bounds a disk in $D$. By choosing an innermost one in $D$, say $c$, this curve also has to bound a disk $O_c$ in $Z_i$ because $Z_i$ is essential in $E_B$. By choosing an innermost curve of $O_c\cap D$ in $O_c$, we can use it to cut and paste $D$ and eliminate this intersection with $Z_i$. Proceeding in this way we can assume that $D$ is disjoint from $Z_i$, $i=1, 2$. Hence, $D$ is in $G_B$. Let $E$ in $G_B$ be a disk co-bounded by $t$ and an arc in $S$, and intersecting $a_+'$ once at a single point $p$. If $E$ is disjoint from $D$, as $D$ is disjoint from $Z_1\cup Z_2$, it cuts a ball from $B$ containing $E$ which is in $G_B$. But then $D$ separates $E$ from $a_+'$, which is a contradiction as $a_+'$ intersects $E$. Otherwise, assume the intersection of $E$ with $D$ to be non-empty and with minimal number of components $|E\cap D|$. In case there are simple closed curves in $E\cap D$ consider an innermost one in $E$, say $c$. Let $E_c$ be the disk cut by $c$ in $E$.  If $E_c$ is disjoint from $p$, by cutting and pasting $D$ along this disk we reduce  $|E\cap D|$, contradicting its minimality. If $E_c$ contains $p$, together with the disk bounded by $c$ in $D$ we have a sphere in $S^3$ intersecting $J$ once, which contradicts, for instance, the 3-dimensional Schoenflies theorem. Then, $E\cap D$ is a collection of arcs. Let $b$ be an outermost arc of $E\cap D$ in $E$ and $E'$ the corresponding outermost disk. Following as for the simple closed curve case we get a contradiction. Hence, the tangle  $(B; s\cup t)$ is essential.\\
At last we prove that $(B; s\cup t)$ is atoroidal. Let $T$ be an essential torus in $B-s\cup t$. As $J$ is hyperbolic the torus $T$ is inessential in $B-s$. Then $T$ is disjoint from $Z_1$ and $Z_2$, as otherwise either some annulus in $T-T\cap Q_\pm$ is inessential in $Q_\pm-u_\pm\cup v_\pm$ and we can reduce $|T\cap Q_\pm|$ or a component of $T\cap \partial Q_\pm$ bounds a disk in $\partial Q_\pm - u_\pm\cup v_\pm$, contradicting $T$ being essential in $B-s\cup t$.
Hence, we have that the torus $T$ is disjoint from $Z_1$ and $Z_2$, but as the 2-string tangles $(Q_\pm; u_\pm\cup v_\pm)$  are  atoroidal, for $J$ being hyperbolic, the torus $T$ is in $G_B$. 
As $t$ is trivial in the exterior of $a_+$ in $G_B$ there is a disk $O$ co-bounded by $t$ and an arc in $\partial (G_B-N(a_+))$ and disjoint from $s$. If $T$ intersects $O$ by considering an innermost curve $c$ of this intersection in $O$ we obtain a compressing disk for $T$ in $B-s\cup t$ or we eliminate that intersection in case $c$ bounds a disk in $T$ by cutting and pasting.  
If $D_T$ is a compressing disk of $T$ in $B-s$. Suppose $D_T$ intersects $O$ and assume that $|D_T\cap O|$ is minimal. If there is a simple closed curve in $D_T\cap O$, consider an innermost one in $O$ with innermost disk $O'$. By cutting and pasting $D_T$ along $O'$ we eliminate this intersection, reducing $|D_T\cap O|$ and contradicting its minimality. If $D_T\cap O$ has no simple closed curves, let $b$ be an outermost arc of the intersection in $O$ with corresponding outermost disk $O'$. The ends of $b$ are in $t$, as $D_T$ is disjoint from $\partial G_B$, and we can isotope $t$ along $O'$ through $D_T$ eliminating this intersection and contradicting the minimality of $|D_T\cap O|$. Then $D_T$ is disjoint from $O$, and, therefore, contradicting $T$ being essential in $B-s\cup t$. So, $(B; s\cup t)$ is atoroidal.  
\end{proof}

\section{A collection of hyperbolic knots}\label{section:hyperbolic knots}

Let us consider a knot $K$ decomposed into two 2-string tangles by the sphere $S$: a 2-string essential atoroidal tangle $(B; s\cup t)$ with meridional essential surfaces of any positive genus and exactly two boundary components in $\partial N(s)$, which from Lemma  \ref{lemma:surfaces in tangle} we know exist; and a 2-string  tangle $(B'; s'\cup t')$.  Denote these tangles by $\mathcal{T}$ and $\mathcal{T}'$ respectively, as illustrated schematically in Figure \ref{figure:hyperbolic knots}(a). We assume $\mathcal{T}'$ has a decomposition into a 2-string essential atoroidal tangle $(B'_1; s'_1\cup t'_1)$ and a 3-string essential atoroidal tangle $(B'_2; s'_2\cup s'_3\cup t'_2)$, denoted $\mathcal{T}'_1$ and  $\mathcal{T}'_2$ respectively, by a  disk $D$, with $\partial D$   separating in $S$ the end of $s$ that meets $t'$ from the other ends of $s\cup t$, as illustrated schematically in Figure \ref{figure:hyperbolic knots}(b). (See the Appendix for examples of $n$-string atoroidal essential tangles.) Denote the set of equivalence classes, up to ambient isotopy, of these knots by $\mathcal{K}$. In Figure \ref{figure:example} we have a diagram of a knot in $\mathcal{K}$ with low crossing number.

\begin{figure}[htbp]

\labellist
\small \hair 2pt
\pinlabel (a) at 10 0
\scriptsize
\pinlabel $B$ at 115 220 
\pinlabel $s$ at 135 190 
\pinlabel $t$ at 135 150 
\pinlabel $B'$ at 30 220 
\pinlabel $s'$ at 100 40 
\pinlabel $t'$ at 200 110 
\pinlabel $S$ at 105 115 

\small
\pinlabel (b) at 415 0
\scriptsize
\pinlabel $B'_2$ at 485 75 
\pinlabel $s'_3$ at 515 40 
\pinlabel $s'_2$ at 490 13 
\pinlabel $B'_1$ at 690 165 
\pinlabel $s'_1$ at 680 90 
\pinlabel $t'_1$ at 680 125 
\pinlabel $S$ at 505 115 
\pinlabel $D$ at 680 35 

\endlabellist

\centering
\includegraphics[width=0.8\textwidth]{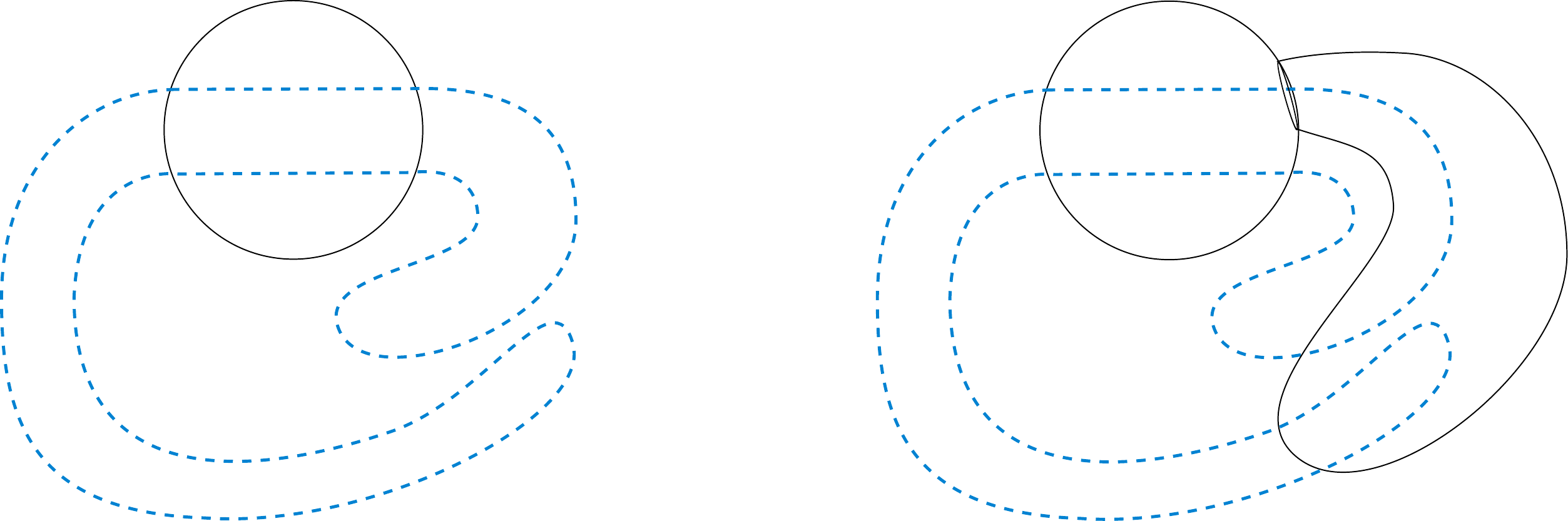}
\caption{: Schematic representation of the tangles $\mathcal{T}$, $\mathcal{T'}$, $\mathcal{T'}_1$ and $\mathcal{T'}_2$.}
\label{figure:hyperbolic knots}
\end{figure}

\begin{figure}[htbp]

\labellist

\scriptsize
\pinlabel $S$ at 155 220 

\endlabellist

\centering
\includegraphics[width=0.4\textwidth]{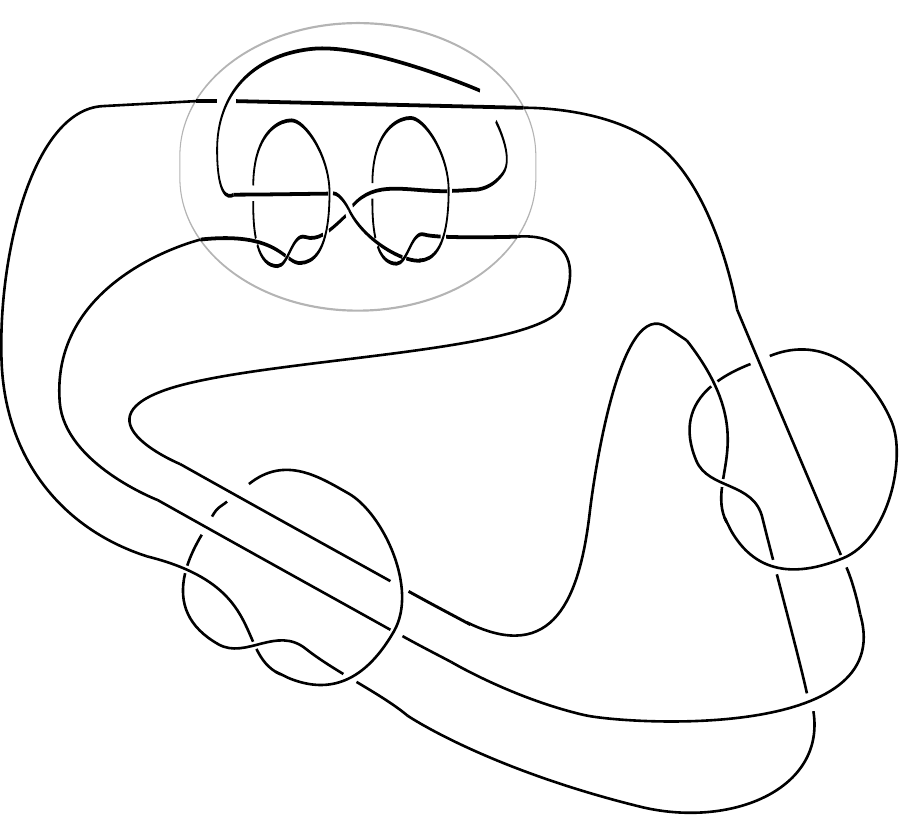}
\caption{: Diagram of a knot of $\mathcal{K}$.}
\label{figure:example}
\end{figure}

We denote the exterior of the strings of $\mathcal{T}$, $\mathcal{T'}$, $\mathcal{T'}_1$ and $\mathcal{T'}_2$  in $B$, $B'$, $B'_1$ and $B'_2$ by  $E_B$, $E_{B'}$, $E_{B'_1}$ and $E_{B'_2}$, respectively. Let $P_D$ denote the punctured disk $D\cap E_{B'}$.

\begin{lem}\label{lemma:punctured disk}
The thrice punctured disk $P_D$ is essential in $E_{B'}$.
\end{lem}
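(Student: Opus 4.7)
The plan is to show $P_D$ is both incompressible and $\partial$-incompressible in $E_{B'}$ by exploiting that it separates $E_{B'}$ into the exteriors $E_{B'_1}$ and $E_{B'_2}$ of the essential subtangles $\mathcal{T}'_1$ and $\mathcal{T}'_2$. In each piece $P_D$ is a proper subsurface of the defining punctured sphere $\Sigma_i$ of $\mathcal{T}'_i$, glued to its complement along $\partial D$; that complement is an annulus $A_1 \subset S_1$ on the $\mathcal{T}'_1$ side (since $S_1$ contains a single string endpoint) and a $4$-holed disk $A_2 \subset S_2$ on the $\mathcal{T}'_2$ side (since $S_2$ contains three string endpoints).

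For incompressibility, a compressing disk $\Delta$ for $P_D$ in $E_{B'}$ lies, by separation, in some $E_{B'_i}$. Its boundary is an essential simple closed curve on the $4$-holed sphere $P_D$, hence separates $\partial P_D$ into two pairs of components; adjoining $A_i$ along $\partial D$ preserves that the resulting simple closed curve on $\Sigma_i$ yields a non-trivial partition of $\partial \Sigma_i$, so it remains essential on $\Sigma_i$, contradicting the incompressibility of $\Sigma_i$ that follows from the essentiality of $\mathcal{T}'_i$.

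For $\partial$-incompressibility, suppose $\Delta \subset E_{B'_1}$ (say) is a $\partial$-compressing disk with $\partial \Delta = \alpha \cup \beta$, where $\alpha \subset P_D$ is an essential arc and $\beta \subset \partial E_{B'} \cap E_{B'_1}$. After an outermost-disk isotopy in the string tubes to minimize how $\beta$ crosses the meridional circles, I may assume $\beta$ lies in a single annular component of $\partial E_{B'} \cap E_{B'_1}$. If both endpoints of $\alpha$ lie on meridional components of $\partial P_D$, then $\alpha$ is already a proper essential arc of $\Sigma_1$ and $\Delta$ is itself a $\partial$-compression of $\Sigma_1$, contradicting essentiality of $\mathcal{T}'_1$. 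If some endpoint of $\alpha$ lies on $\partial D$, I use the structure of $\beta$ in its annulus: either $\alpha$ can be extended through a sub-arc of $\beta$ inside $A_1$ to a proper essential arc $\tilde\alpha$ of $\Sigma_1$ joining two distinct meridional components, with respect to which $\Delta$ is a $\partial$-compression of $\Sigma_1$; or $\beta$ cuts off a disk in the annulus, and gluing this to $\Delta$ along $\beta$ produces a disk in $E_{B'_1}$ bounded by a simple closed curve $\alpha \cup d_\beta$ on $\Sigma_1$ (with $d_\beta$ an arc on $\partial D$). If the latter curve is essential on $\Sigma_1$ we again contradict incompressibility of $\Sigma_1$; otherwise it is boundary-parallel to a meridional circle $m_i$, and via the annulus of parallelism we obtain a disk in $E_{B'_1}$ bounded by $m_i$, whose union with the meridional disk of the corresponding string in its regular neighborhood yields a $2$-sphere in the $3$-ball $B'_1$ meeting a properly embedded arc transversely in a single point, contradicting the Schoenflies theorem as in the proof of Lemma~\ref{lemma:surfaces in tangle}. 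The argument for $\Delta \subset E_{B'_2}$ is symmetric, using the essentiality of $\mathcal{T}'_2$ and the $4$-holed disk $A_2$.

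The main obstacle is the case analysis for $\partial$-incompressibility when an endpoint of $\alpha$ lies on $\partial D$: one must carefully identify the simple closed curve or proper arc on $\Sigma_i$ produced from $\Delta$, and in the sub-case when the resulting curve is boundary-parallel on $\Sigma_i$, invoke the Schoenflies argument to rule out the produced meridional disk.
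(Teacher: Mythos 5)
Your strategy is the same as the paper's: the paper's entire proof is the one-line observation that $P_D$ is an essential subsurface of the punctured sphere $E_{B'_i}\cap\partial B'_i$ and that this punctured sphere is essential in $E_{B'_i}$ because $\mathcal{T}'_i$ is an essential tangle, for $i=1,2$. You are supplying exactly the details that this sentence suppresses, using the same decomposition of $E_{B'}$ along $P_D$ into the subtangle exteriors.

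There is, however, one concrete gap in your incompressibility step. The boundary of a compressing disk $\Delta$ for $P_D$ is only required not to bound a disk \emph{in} $P_D$; it may perfectly well be parallel in $P_D$ to a component of $\partial P_D$, in which case it partitions $\partial P_D$ as $1+3$ rather than $2+2$, and your claim that the curve ``remains essential on $\Sigma_i$'' fails. The genuinely problematic instance is a curve parallel to $\partial D$ in $P_D$ bounding a disk in $E_{B'_1}$: on your $\Sigma_1$ this curve is parallel to the single meridian lying in the annulus $A_1$, so it is \emph{not} essential on $\Sigma_1$ and the essentiality of $\mathcal{T}'_1$ gives no contradiction directly. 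One must instead cap $\Delta$ off through the parallelism annulus and the meridian disk of the corresponding string to obtain a $2$-sphere meeting the knot transversely in a single point, contradicting the Schoenflies theorem (or homology) --- the same device you already invoke in your boundary-compression analysis. The case of a curve parallel to a meridional component of $\partial P_D$ is disposed of identically. A second, smaller imprecision: in the boundary-compression analysis the arc $\beta$ can run through $A_i$, which is part of $\Sigma_i$ rather than of the string tubes, so $\Delta$ is not literally a $\partial$-compressing disk for $\Sigma_i$ until $\beta$ has been pushed off $A_i$ (possible because $\beta$ is boundary-parallel in the annular component of $\partial E_{B'}-\partial P_D$ containing it); your ``outermost-disk isotopy'' gestures at this but should be made explicit. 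With these points repaired the argument is complete and matches the paper's intent.
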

\begin{proof}
This lemma follows from $P_D$ being essential in $E_{B'_i}\cap \partial B'_i$ and from $E_{B'_i}\cap \partial B'_i$ being essential in $B'_i$ (for $\mathcal{T'}_i$ being an essential tangle), for $i=1,2$.
\end{proof}

\begin{lem}\label{lemma: T' is atoroidal}
The tangle $\mathcal{T}'$ is essential and atoroidal.
\end{lem}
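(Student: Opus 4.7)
The plan is to parallel the proof of Lemma~\ref{lemma:surfaces in tangle}: push any hypothetical compressing disk or essential torus in $E_{B'}$ off the thrice-punctured disk $P_D$ using its essentiality from Lemma~\ref{lemma:punctured disk}, and then invoke the essentiality and atoroidality of $\mathcal{T}'_1$ and $\mathcal{T}'_2$ on the resulting pieces.

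For the essentiality of $\mathcal{T}'$, suppose $E$ is a compressing disk for $\partial B' - (s' \cup t')$ in $E_{B'}$, isotoped to minimize $|E \cap P_D|$. Innermost closed curves of $E \cap P_D$ in $E$ are eliminated by the usual cut-and-paste using incompressibility of $P_D$, and outermost arcs of $E \cap P_D$ in $E$ are eliminated using boundary-incompressibility of $P_D$ (the outermost disk in $E$ provides boundary compression data for $P_D$ along a sub-arc of $\partial D$). After these reductions $E$ lies in some $E_{B'_i}$ with $\partial E$ in the corresponding disk component of $\partial B' - \partial D$. If $i = 1$, this disk contains only one string endpoint, so $\partial E$ cannot separate the four string endpoints of $\partial B' - (s' \cup t')$ into pairs of two and hence fails to be essential in the $4$-punctured sphere, a contradiction. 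If $i = 2$, an essential $\partial E$ encloses exactly two of the three string endpoints on the $B'_2$-side, and these two endpoints remain separated from the other four in the $6$-punctured sphere $\partial B'_2 - (s'_2 \cup s'_3 \cup t'_2)$; so $E$ is a compressing disk for $\mathcal{T}'_2$, contradicting its essentiality. An analogous argument rules out boundary-compressing disks.

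For atoroidality, let $T$ be an essential torus in $E_{B'}$ with $|T \cap P_D|$ minimized. Components of $T \cap P_D$ inessential in $T$ are removed by an innermost-disk argument in $T$ combined with incompressibility of $P_D$, so every remaining component is essential in $T$ and the components are pairwise parallel in $T$. A remaining curve trivial in $P_D$ would bound a compressing disk for $T$ in $P_D$, contradicting $T$ essential, so every component is essential in $P_D$ too. If $T \cap P_D = \emptyset$ then $T \subset E_{B'_i}$; since $\partial E_{B'_i}$ is a closed surface of genus at least two and in particular has no torus component, $T$ cannot be boundary-parallel and so remains essential in $E_{B'_i}$, contradicting the atoroidality of $\mathcal{T}'_i$. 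Otherwise $T$ cut along $P_D$ is a nonempty family of annuli, each embedded in some $E_{B'_i}$ with boundary in $P_D$; any such annulus compressible in $E_{B'_i}$ would compress $T$, and any annulus boundary-parallel in $E_{B'_i}$ is parallel to a sub-annulus of $P_D$, permitting an isotopy that reduces $|T \cap P_D|$.

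The main obstacle is the case where every annulus piece of $T$ is essential and not boundary-parallel in its $E_{B'_i}$. I would address it by exploiting the combinatorial constraint that $\partial D$ separates the endpoint of $s$ meeting $t'$ from the three other string endpoints; this tightly restricts how essential simple closed curves in $P_D$ that are pairwise parallel in $T$ can be arranged. The aim is to show that consecutive such curves cobound a sub-annulus in $P_D$, so that each annulus piece of $T$ together with this sub-annulus bounds a torus in the corresponding $E_{B'_i}$. Atoroidality of $\mathcal{T}'_i$ would then force this torus to be inessential and the annulus piece of $T$ to be boundary-parallel in $E_{B'_i}$, eventually reducing to the previously handled $T \cap P_D = \emptyset$ case.
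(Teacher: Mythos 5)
Your essentiality argument follows the same route as the paper's (minimize $|E\cap P_D|$, use Lemma~\ref{lemma:punctured disk} to clear the intersection, then locate $\partial E$ in one of the two disks of $S-\partial D$ and appeal to $\mathcal{T}'_1$ or $\mathcal{T}'_2$ being essential), and that half is sound.

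The atoroidality argument has a genuine gap, and you flag it yourself. After reducing to the case where $T$ is cut by $P_D$ into annuli, each properly embedded in some $E_{B'_i}$ with boundary on $P_D$, you only announce an intention (``I would address it by\ldots'', ``The aim is to show\ldots'') to prove that consecutive curves of $T\cap P_D$ cobound a sub-annulus of $P_D$. That claim is precisely the crux of the lemma: it is where the hypothesis that $\partial D$ separates one string endpoint from the other three --- equivalently, that each string of $\mathcal{T}'_2$ has exactly one end on $D$ --- enters, and nothing in your write-up derives it. Note that atoroidality of the sub-tangles cannot by itself dispose of an essential annulus piece, since the $\mathcal{T}'_i$ are assumed atoroidal, not anannular; without capping the annulus off to a torus there is no contradiction to appeal to. The paper closes exactly this case: it takes an annulus $A$ of $T-T\cap D$ lying in $B'_2$ with both boundary curves on $D$, uses the one-end-per-string condition to produce an annulus $A'\subset D$ with $\partial A'=\partial A$ disjoint from the strings, forms the torus $P'=A\cup A'$, and then argues the dichotomy: either the region $P'$ bounds in $B'_2$ is not a solid torus, in which case incompressibility of $A$ makes $P'$ incompressible and contradicts $\mathcal{T}'_2$ being atoroidal, or it is a solid torus, in which case $A$ is boundary-compressible, hence parallel to $A'$, and an isotopy across $A'$ reduces $|T\cap D|$. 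Your sketch also elides this last step: ``the torus is inessential'' does not immediately yield ``the annulus piece is boundary-parallel''; the solid-torus case analysis is needed. Until the sub-annulus claim and this dichotomy are actually proved, the proposal does not establish atoroidality.
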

\begin{proof}
Suppose, by contradiction, that $\mathcal{T}'$ is inessential. Let $E$ be a disk in $B'$ separating the strings $s'\cup t'$. Let us consider the intersection of $E$ with $P_D$ and assume that $|E\cap P_D|$ is minimal among all choices of $E$. If $E\cap P_D$ has circle components, let $\alpha$ be an innermost  one  in $E$, otherwise let $\alpha$ be an outermost arc of $E\cap P_D$ in $E$, and denote by $O_E$ the corresponding innermost/outermost disk. Hence, as $P_D$ is essential in $E_{B'}$, from Lemma \ref{lemma:punctured disk}, the disk $O_E$ cannot be a (boundary) compressing disk for $P_D$. Then, $\alpha$ cuts a disk $O_D$ from $P_D$.  By considering an innermost/outermost curve/arc of $O_D\cap E$ in $O_D$, say $c$, and the corresponding innermost/outermost disk $O_c$, then by cutting and pasting $E$ with respect to $O_c$ at least one of the resulting disks separates the strings of $\mathcal{T}'$ reducing $|E\cap P_D|$ and contradicting its minimality. Hence $\mathcal{T}'$ is essential.\\
Suppose now, also by contradiction, that $B'-s'\cup t'$ has an embedded essential torus $P$. As $\mathcal{T}'_1$ and  $\mathcal{T}'_2$ are atoroidal, $P$ intersects $D$.  Let us consider the intersection of $P$ with $D$ and assume that $|P\cap D|$ is minimal among all choices of $P$. Suppose a component of $P\cap D$ bounds a disk  in $P$ and consider an innermost disk $O_P$ among these in $P$. As $P_D$ is essential in $E_{B'}$, $\partial O_P$ bounds a disk $O_D$ in $P_D$. As in a previous argument, consider an innermost curve of $O_D\cap E$ in $O_D$, say $c$, and the corresponding innermost disk $O_c$. As $P$ is essential in $B'-s'\cup t'$, the boundary of $O_c$ bounds a disk in $P$. Then by cutting and pasting $P$ with respect to $O_c$ we reduce $|P\cap P_D|$, contradicting its minimality.  Hence, all components of $P\cap D$ in $P$ are parallel essential curves in $P$ and the components of $P-P\cap D$ are all annuli. As $P$ is a torus and $D$ is separating, there are at least two annuli and one of these, say $A$, is in $B'_2$ and has both boundary components in $D$. As  each string of $\mathcal{T}'_2$ has only one end in $D$, the annulus $A'$ that $\partial A$ bounds in $D$  is disjoint from $s'\cup t'$. Let $P'$ be the torus $A\cup A'$.  In case the component $P'$ bounds in $B'_2$ is not solid torus, as $A$ is incompressible in $B'_2-s'_2\cup t'_2$, this implies that $P'$ is incompressible in $B'_2-s'_2\cup t'_2$, contradicting $\mathcal{T'}_2$ being atoroidal. Otherwise, the component $P'$ bounds in $B'_2$ is a solid torus, and, therefore, we have that $A$ is boundary compressible, and hence is parallel to $A'$. Then, we can isotope $P$ along $A$ through $A'$ and reduce $|P\cap D|$, contradicting its minimality. That is, $\mathcal{T}'$ is atoroidal. 
\end{proof}

\begin{lem}\label{lemma:hyperbolic}
The knot $K$ is hyperbolic.
\end{lem}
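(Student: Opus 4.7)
The plan is to apply Thurston's hyperbolization theorem to $E(K)$: it suffices to verify that $E(K)$ is irreducible, $\partial$-irreducible, atoroidal, and anannular.

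For irreducibility and $\partial$-irreducibility, I would use that both $\mathcal{T}$ and $\mathcal{T'}$ are essential, by Lemma \ref{lemma:surfaces in tangle} and Lemma \ref{lemma: T' is atoroidal}; hence $P := S \setminus N(K)$ is an essential $4$-punctured sphere in $E(K)$. This rules out $K$ being the unknot (whose exterior is a solid torus and carries no meridional essential planar surface with more than two boundary components), so $E(K)$ is $\partial$-irreducible. For irreducibility, any embedded $2$-sphere $\Sigma$ in $E(K)$ may be made disjoint from $P$ by innermost-disk cutting and pasting using the incompressibility of $P$, after which $\Sigma$ lies in $E_B$ or $E_{B'}$ and bounds a ball since each tangle exterior is irreducible.

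For atoroidality, I would take an essential torus $T \subset E(K)$ with $|T \cap P|$ minimal. If $T \cap P = \emptyset$, then $T$ lies entirely in $E_B$ or $E_{B'}$, contradicting atoroidality by Lemma \ref{lemma:surfaces in tangle} or Lemma \ref{lemma: T' is atoroidal}. Otherwise, minimality together with incompressibility of $P$ and of $T$ forces every component of $T \cap P$ to be essential on both surfaces, so $T \cap E_B$ and $T \cap E_{B'}$ consist of incompressible annuli with essential boundary on $P$, all parallel in $T$. The goal is then to show that some such annulus is boundary-parallel in its tangle exterior, giving an isotopy that reduces $|T \cap P|$, a contradiction. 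For the pieces sitting in $E_{B'}$ I would minimize their intersection with $D$ using Lemma \ref{lemma:punctured disk}, then apply essentiality and atoroidality of $\mathcal{T'}_1$ and $\mathcal{T'}_2$ to dispose of the resulting sub-annuli, exactly as in the proof of Lemma \ref{lemma: T' is atoroidal}. For the pieces in $E_B$ I would exploit atoroidality and essentiality of $\mathcal{T}$ from Lemma \ref{lemma:surfaces in tangle} together with a tube-and-cap argument converting a non-boundary-parallel essential annulus together with a pushed-off copy of $P$ into an essential torus. Anannularity follows by the same scheme applied to an essential annulus $A$ in place of $T$: once $A \cap P$ is minimized one analyses its annular and disk pieces in each tangle and in each sub-tangle, with the added subtlety that one allows boundary components of the pieces to lie on $\partial N(K)$.

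The hard part is precisely this analysis of essential annular pieces in $E_B$ and in $E_{B'}$ whose boundary lies on $P$: atoroidality of an individual tangle does not by itself forbid such annuli, so one must combine the incompressibility of $P$ and $P_D$ with the sub-decomposition of $\mathcal{T'}$ along $D$ in a delicate innermost/outermost cut-and-paste argument, and use the specific structure of $\mathcal{T}$ coming from Lemma \ref{lemma:surfaces in tangle}. Once this case analysis is complete, Thurston's hyperbolization theorem yields that $E(K)$ is hyperbolic, and hence $K$ is a hyperbolic knot.
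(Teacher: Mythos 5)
Your overall strategy is sound in outline but it diverges completely from the paper, which disposes of this lemma in two lines: since $\mathcal{T}$ is essential and atoroidal by construction (Lemma~\ref{lemma:surfaces in tangle}) and $\mathcal{T}'$ is essential and atoroidal by Lemma~\ref{lemma: T' is atoroidal}, the knot $K$ admits a $2$-string essential atoroidal tangle decomposition, and hyperbolicity is then exactly the content of Theorem~1 of Soma's \emph{Simple links and tangles} \cite{S-83}. What you are proposing is, in effect, to reprove Soma's theorem from scratch rather than cite it.

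Viewed as a standalone argument, the proposal has a genuine gap, and it sits precisely where you flag ``the hard part.'' After minimizing $|T\cap P|$ you obtain incompressible annuli in $E_B$ and $E_{B'}$ with essential boundary on the $4$-punctured sphere $P$, and you then need to show each such annulus can be eliminated. But atoroidality and essentiality of a tangle do \emph{not} by themselves forbid essential annuli in its exterior with boundary on $P$ (an annulus can follow a knotted string, or separate the two strings, without producing an essential torus when capped off); converting such an annulus into a torus by tubing along $P$ requires controlling how its boundary curves sit on the $4$-punctured sphere, and the resulting torus can be compressible or boundary-parallel even when the annulus is essential. You assert the conclusion (``some such annulus is boundary-parallel in its tangle exterior'') without an argument, and the anannularity case --- which is what actually rules out $K$ being a torus or cable knot and is essential for invoking Thurston --- is only gestured at. This missing case analysis is exactly what Soma's Theorem~1 supplies. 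Either carry out that analysis in full (which is a substantial piece of work independent of the rest of the paper) or, as the paper does, reduce the lemma to the verified hypotheses of \cite{S-83} and cite it.
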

\begin{proof}
The knot $K$ has a 2-string atoroidal essential tangle decomposition, as $\mathcal{T}$ is essential and atoroidal, by definition, and $\mathcal{T}'$ is essential and atoroidal, from Lemma \ref{lemma: T' is atoroidal}. The statement then follows from Theorem 1 in \cite{S-83} by Soma.
\end{proof}

\begin{lem}\label{lemma:surfaces}
The exterior of the knot $K$ has meridional essential surfaces  in $B$ of any positive genus and exactly two boundary components in $\partial N(s)$.
\end{lem}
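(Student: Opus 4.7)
The plan is to leverage essentiality of $F$ in $E_B$ (Lemma \ref{lemma:surfaces in tangle}) together with essentiality of the decomposing sphere $S$ in $E(K)$, and then propagate essentiality across the gluing of $\mathcal{T}'$ onto $\mathcal{T}$. The surface $F$ already sits entirely inside $B$ with boundary on $\partial N(s)\subset\partial N(K)$, so the only new way it could fail to be essential in $E(K)$ is through a compressing disk that reaches into $B'$ by crossing $S$. The strategy is a standard innermost-disk argument pushing any such disk off of $S$ back into $B$.

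First I would record that the 4-punctured sphere $P_S=S\cap E(K)$ is incompressible in $E(K)$: on the $B$-side this follows from $\mathcal{T}$ being essential (Lemma \ref{lemma:surfaces in tangle}), and on the $B'$-side from Lemma \ref{lemma: T' is atoroidal}. In addition, both $E_B$ and $E_{B'}$ are irreducible, since each tangle is essential inside a ball in $S^3$. Now suppose for contradiction that $D$ is a compressing disk for $F$ in $E(K)$, chosen so that $|D\cap S|$ is minimal. Because $F$ lies in the interior of $B$, the curve $\partial D$ is disjoint from $S$, and $D\cap S$ consists of simple closed curves. If this intersection is non-empty, I would pick an innermost circle $c$ in $D$ bounding a subdisk $D'\subset D$ contained entirely in one of $B$ or $B'$ and disjoint from the tangle strings. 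By incompressibility of $P_S$ on the corresponding side, $c$ bounds a disk $D''\subset P_S$; the sphere $D'\cup D''$ bounds a ball by irreducibility, and isotoping $D$ across this ball strictly reduces $|D\cap S|$, contradicting minimality. Hence $D\cap S=\emptyset$, forcing $D\subset B$, so that $D$ is a compressing disk for $F$ already in $E_B$, contradicting Lemma \ref{lemma:surfaces in tangle}. Thus $F$ is incompressible in $E(K)$.

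For non-boundary-parallelism, note that $F$ has genus $\geq1$ and two boundary components, while $\partial E(K)$ is a torus, and every proper subsurface of a torus with non-empty boundary is planar; hence $F$ cannot be isotoped into $\partial E(K)$. Meridionality of $\partial F$ is inherited directly from the construction: $\partial F\subset\partial N(s)\subset\partial N(K)$ consists of meridians of $s$, which are meridians of $K$.

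The only delicate step is the innermost-disk reduction, where one must be careful to pair $D'$ with $D''$ on the same side of $S$ and to invoke irreducibility of $E_B$ or $E_{B'}$ correspondingly; this is routine given the essentiality of both tangles, but worth writing out explicitly. Non-$\partial$-parallelism and meridionality are immediate consequences of the boundary geometry.
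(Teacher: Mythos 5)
Your argument is essentially the paper's proof written out in full: the paper disposes of this lemma in one line, asserting that since $F$ is essential in $E_B$ and the decomposing sphere $\partial B$ is essential in $E(K)$, essentiality of $F$ persists in $E(K)$, and your innermost-circle reduction across $S$ (using incompressibility of $P_S$ from both sides, which you correctly source from $\mathcal{T}$ and $\mathcal{T}'$ both being essential, together with irreducibility of the tangle exteriors) is exactly the standard argument behind that assertion. One small caveat: the paper's notion of ``essential'' appears to include boundary-incompressibility (compare the proof of Proposition \ref{prop:S2-S3}, which speaks of ``(boundary) compressing disks''), and your write-up only treats compressing disks and boundary-parallelism; for a boundary-compressing disk the arc of $\partial D$ on $\partial N(K)$ may cross $S\cap\partial N(K)$, so $D\cap S$ can contain arcs and one needs the companion outermost-arc case --- or, alternatively, the standard observation that an incompressible meridional surface of positive genus in a knot exterior is automatically boundary-incompressible.
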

\begin{proof}
Let $F$ be a surface in $B$ as in Lemma \ref{lemma:surfaces in tangle}, which can be chosen of any positive genus and two boundary components in $\partial N(s)$.\\
As $F$ is essential in $E_B$ and $\partial B$ is essential in $E(K)$, it follows that $F$ is essential in $E(K)$.
\end{proof}

\begin{lem}\label{lemma:infinite}
The set $\mathcal{K}$ is an infinite collection of hyperbolic knots.
\end{lem}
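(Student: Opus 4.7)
The plan is to produce an explicit sequence $\{K_n\}_{n\geq 1}$ of knots in $\mathcal{K}$ and then show that infinitely many of the $K_n$ are pairwise non-isotopic. I would fix the tangle $\mathcal{T}$ supplied by Lemma \ref{lemma:surfaces in tangle} and a fixed 2-string atoroidal essential tangle $\mathcal{T}'_1$, letting only the 3-string atoroidal essential tangle $\mathcal{T}'_2=\mathcal{T}'_{2,n}$ vary over an infinite family. Such a family should come from the Appendix's construction: start with a base 3-string atoroidal essential tangle and insert $n$ full twists along a chosen twist sphere, in a region whose essentiality and atoroidality are preserved under twisting (which requires a short standard verification analogous to the arguments in Section \ref{section:tangle with surfaces}). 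Assembled as in Figure \ref{figure:hyperbolic knots}, each choice yields a knot $K_n\in\mathcal{K}$, hyperbolic by Lemma \ref{lemma:hyperbolic}.

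To separate the $K_n$ up to ambient isotopy, my plan is to use hyperbolic volume. By Mostow rigidity the quantity $\mathrm{vol}(S^3\setminus K_n)$ is a topological invariant of $K_n$, and by the Thurston--J\o rgensen theorem only finitely many hyperbolic 3-manifolds realize any given volume, so it suffices to exhibit infinitely many distinct values of $\mathrm{vol}(S^3\setminus K_n)$. I would realize the twist insertion as $1/n$ Dehn filling on a fixed cusped parent manifold $M$, namely the exterior in $S^3$ of $K$ together with the twist circle encircling the two strands of $\mathcal{T}'_{2,n}$ affected by the twisting. By Thurston's hyperbolic Dehn surgery theorem, provided $M$ is itself hyperbolic, all but finitely many fillings $M(1/n)$ are hyperbolic with $\mathrm{vol}(M(1/n))\to\mathrm{vol}(M)$ from below, so the volumes of the $S^3\setminus K_n$ take infinitely many distinct values, forcing infinitely many isotopy classes in $\mathcal{K}$.

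The main obstacle will be checking that the parent link $K\cup(\text{twist circle})$ is hyperbolic. This amounts to ruling out essential tori and essential annuli in its exterior, and it should follow from the atoroidality established in Lemmas \ref{lemma: T' is atoroidal} and \ref{lemma:hyperbolic} combined with a choice of the twist circle deep inside $\mathcal{T}'_{2,n}$, disjoint from the decomposition spheres $\partial B$ and $\partial D$, together with Thurston's hyperbolization of atoroidal Haken manifolds applied to the two-component link exterior. Should this verification prove delicate, a cleaner alternative would be to bypass volume entirely and distinguish the $K_n$ via a combinatorial invariant such as bridge number, tunnel number, or the crossing number of the tangle piece $\mathcal{T}'_{2,n}$, any of which can be arranged to grow with $n$ and hence to force infinitely many isotopy classes.
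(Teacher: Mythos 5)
Your overall strategy---produce an explicit infinite family in $\mathcal{K}$ and separate its members by an invariant---is sound, but both the family you build and the invariant you use differ from the paper's, and your route leaves two substantive verifications open. First, membership in $\mathcal{K}$: the set $\mathcal{K}$ is defined by requiring $\mathcal{T}'_2$ to be a $3$-string \emph{essential atoroidal} tangle, so you must actually prove that inserting $n$ full twists preserves essentiality and atoroidality for all (or infinitely many) $n$; this is not automatic, since an essential disk or torus in the twisted tangle exterior need not be disjoint from the twist circle, and you only gesture at ``a short standard verification.'' Second, and more seriously, the hyperbolicity of the augmented link $K\cup c$ does not follow from Lemmas \ref{lemma: T' is atoroidal} and \ref{lemma:hyperbolic}: those statements concern the knot exterior and the tangle exteriors, whereas you need the \emph{two-component link} exterior to be irreducible, atoroidal, anannular and not Seifert fibered. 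A torus or annulus can be essential in $E(K\cup c)$ while being compressible or boundary-parallel once $c$ is refilled ($\partial N(c)$ itself is the first case to exclude), so a separate analysis of surfaces meeting or encircling $c$ is required. Your fallback---distinguishing the $K_n$ by bridge, tunnel or crossing number---is weaker still: none of the needed lower bounds ``can be arranged'' without proof, and such bounds under tangle replacement are generally hard.

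For comparison, the paper avoids hyperbolic geometry entirely and varies the \emph{other} tangle: it takes $\mathcal{T}=(B;s\cup t)$ to be a stack of essential atoroidal $2$-string tangles separated by disks $X_1,\dots,X_n$, each meeting $s\cup t$ in two points, with the innermost piece the tangle of Lemma \ref{lemma:surfaces in tangle}; it checks via Soma's Theorem~2 that the stack is still essential and atoroidal (so the resulting knots lie in $\mathcal{K}$ and Lemma \ref{lemma:hyperbolic} applies), and then observes that $K_n$ has at least $n$ disjoint pairwise non-parallel meridional planar essential surfaces in its exterior. Kneser--Haken finiteness bounds the number of such surfaces in any fixed knot exterior, and Gordon--Luecke converts non-homeomorphic exteriors into non-equivalent knots, so $\{K_n\}$ contains infinitely many knot types. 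This buys a completely elementary distinguishing mechanism whose hypotheses are already established earlier in the paper, at the cost of being less quantitative than a volume argument. If you want to salvage your approach, the two hyperbolicity verifications above are exactly where the work lies.
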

\begin{proof}
From Lemma \ref{lemma:hyperbolic}, each element of $\mathcal{K}$ is an hyperbolic knot.\\
Consider a 2-string tangle $\mathcal{T}$  decomposed by a collection  of disjoint disks $\{X_1, \ldots, X_n\}$, $n\in \mathbb{N}$, in $B$, each intersecting $s\cup t$ at two points, one point from each string, such that the 2-string tangle $\mathcal{T}_i$ cut by $X_i\cup X_{i+1}$ from $\mathcal{T}$ is essential and atoroidal. For  $\mathcal{T}_i$ being essential, also such a $\mathcal{T}$ is essential. From Theorem 2 of \cite{S-83}, as $\mathcal{T}_i$, $i=1, \ldots, n$, is atoroidal, we have that $\mathcal{T}$ is atoroidal.  If we consider $\mathcal{T}_1$ as in Lemma \ref{lemma:surfaces in tangle}, we have that the surfaces in the exterior  $\mathcal{T}_1$ remain essential in the exterior of $\mathcal{T}$. Hence, the tangle $\mathcal{T}$ has meridional essential surfaces of any positive genus and exactly two boundary components in the boundary of a regular neighborhood of one string. Therefore, there are knots in $\mathcal{K}$ with $(B;s\cup t)$ as $\mathcal{T}$. Denote by $K_n$ a knot of the collection $\mathcal{K}$ with the tangle $\mathcal{T}$ as just defined, for each $n\in \mathbb{N}$. Note that $K_n$ has at least $n$ disjoint non-parallel meridional planar essential surfaces in its exterior. Therefore, from Kneser-Haken finiteness theorem for essential surfaces with boundary properly embedded in 3-manifolds, as for instance in Jaco's book \cite{Jaco}, and from knots being determined by their complements \cite{GL-89}, we have that the set of equivalence classes of knots $\{K_n: n\in \mathbb{N}\}$ contains an infinite subset of non-equivalent knots. As $\mathcal{K}$ contains this set, it also has infinite cardinality.
\end{proof}

\section{Meridional essential surfaces from a branched surface}\label{section:surfaces}

In this section we prove the Theorem \ref{main} by showing that each knot $K$ in $\mathcal{K}$ is as in the statement of the theorem.

\begin{lem}\label{lemma: boundaries}
Suppose that the exterior of $K\in \mathcal{K}$ has a meridional essential surface $F$ in $B$ of genus $g$ and $2n$ boundary components with exactly two boundary components in the boundary of $N(s)$. Then the exterior of $K$ has meridional essential surfaces of genus $g$ and $2b$ boundary components for every $b\geq n$. 
\end{lem}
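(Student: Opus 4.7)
For $b=n$ the surface $F$ itself works, so assume $b>n$. I would produce the desired surface by tubing $F$ successively with $b-n$ essential $4$-punctured spheres along annuli on $\partial N(K)$. Since a $4$-punctured sphere has Euler characteristic $-2$ and each tubing identifies one boundary of the $4$-punctured sphere with one boundary of the current surface, a direct Euler-characteristic computation shows that each tubing adds exactly two meridional boundary components while preserving the genus.

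The $4$-punctured spheres I would use are nested push-offs of the tangle sphere $S$ into $B'$. The surface $S\cap E(K)$ is essential in $E(K)$, because both tangles $\mathcal{T}$ and $\mathcal{T}'$ are essential (the latter by Lemma \ref{lemma: T' is atoroidal}); hence any parallel push-off $S_i$ of $S$ into $B'$ gives an essential $4$-punctured sphere $S_i\cap E(K)$. I would choose $b-n$ such push-offs $S_1,\ldots,S_{b-n}$, nested in $B'$ and disjoint from one another; they are automatically disjoint from $F$ since $F\subset B$. Starting from one of $F$'s two meridional boundaries on $\partial N(s)$, tube $F$ to $S_1$ along the thin annulus $A_1\subset\partial N(K)$ that it cobounds with the nearest boundary of $S_1$; then iteratively tube $S_{i-1}$ to $S_i$ along a thin annulus $A_i\subset\partial N(K)$ joining a remaining boundary of $S_{i-1}$ to a matching boundary of $S_i$. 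The resulting connected meridional surface $\widetilde F$ has genus $g$ and $2b$ boundary components.

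The main obstacle is to show that $\widetilde F$ is essential in $E(K)$. I would use a branched-surface argument, following the program announced in the introduction. View the union
\[
B_r \;:=\; F \cup S_1 \cup \cdots \cup S_{b-n} \cup A_1 \cup \cdots \cup A_{b-n}
\]
as a branched surface with branch locus along the circles $\partial A_i$, fully carrying $\widetilde F$, and verify that $B_r$ is essential (no disks or half-disks of contact, no Reeb components). Each verification reduces, via innermost-disk and outermost-arc arguments on the intersections of a hypothetical bad disk with $F$ and the $S_i$, to one of the following contradictions: a compressing or boundary-compressing disk for $F$ (ruled out because $F$ is essential by hypothesis); a compressing disk for $S\cap E(K)$ (ruled out because it is essential); or an essential torus in one of $\mathcal{T},\mathcal{T}'_1,\mathcal{T}'_2$ (ruled out by their atoroidality, invoking Lemma \ref{lemma: T' is atoroidal}). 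Once $B_r$ is essential, the fully carried surface $\widetilde F$ is essential in $E(K)$, and the lemma follows.
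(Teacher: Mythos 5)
Your bookkeeping is fine: each tubing does add two meridional boundary components and preserve the genus, and $S\cap E(K)$ is indeed essential because both tangles are essential. The gap is at the heart of the argument: the surface $\widetilde F$ you build is in general \emph{compressible}, and the branched surface you propose is not incompressible, so Oertel's theorem cannot be invoked. Concretely, two consecutive push-offs $S_{i-1}$ and $S_i$ cobound a product region homeomorphic to $P\times I$, where $P=S\cap E(K)$ is the $4$-punctured sphere, and your annulus $A_i$ is a push-off of a vertical annulus $c\times I$ over one boundary circle $c$ of $P$. Take an essential arc $\alpha$ in $P$ with both endpoints on $c$ that separates the remaining three boundary circles of $P$ nontrivially; then $\alpha\times I$, truncated along the tube, is a disk with boundary on $\widetilde F$, and that boundary is essential in $\widetilde F$ because each of its two sides contains punctures of $\widetilde F$ (or further tubes). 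This compressing disk lies entirely inside the product region between $S_{i-1}$ and $S_i$: it never meets $F$, never yields a compression of $S\cap E(K)$, and never produces a torus in any sub-tangle, so none of the contradictions you list is available. The same disk shows that $\partial_h N(B_r)$ is compressible in the complementary product regions of your branched surface, so condition (ii) of incompressibility fails. This is the standard pitfall of tubing parallel copies of a single essential surface along short annuli; it is exactly why the construction recalled in Section 2 must use a non-centered meridional accidental annulus to organize its nested tubings.

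The paper's proof avoids parallel copies of $S$ altogether. It first forms $F_1=F\cup_{\partial D}P_D$ using the thrice-punctured disk $P_D=D\cap E_{B'}$, and then builds a single branched surface $R=F\cup A\cup A'\cup A_D\cup P'_D$ in which the extra boundary components come from putting weight $j$ on the sector $P'_D$, with the connecting annulus $A'$ routed around the sub-tangle $\mathcal{T}'_1$. Because all parallel copies of $P'_D$ are carried by one sector, the product regions between them lie inside $N(R)$ rather than in its complement; the complementary regions are the exterior of $F$ in $B$, the exterior of $\mathcal{T}'_1$, and a region bounded by $F_1$, where incompressibility and the absence of monogons follow from the essentiality of $F$, of $\mathcal{T}'_1$, and of $F_1$. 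This is precisely why the knots in $\mathcal{K}$ are required to admit the decomposition of $\mathcal{T}'$ along $D$ into essential atoroidal sub-tangles. Your proposal never uses $D$, $P_D$, or that decomposition, which is a sign that the incompressibility you need cannot be extracted from the data you do use.
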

\begin{proof}
We define the surfaces in the exterior of $K$ as in the statement of the lemma, denoted by $F_0, \ldots F_j, \ldots$, where $F_j$ has $2(n+j)$ boundary components, and afterwards we prove they are essential in $E(K)$. We will use branched surface theory for this process.\\
The surface $F_0$ is defined as $F$ in $E(K)$.
We isotope the closest boundary component of $F$ in $\partial N(s)$ to $\partial D$ (which we can isotope to be in $\partial N(s)$) and identify it with $\partial D$. We define $F_1$ as the resulting surface from the union of $F$ and $P_D$ along $\partial D$, which after an isotopy if needed it is properly embedded in the exterior of $K$ .


\begin{prop}\label{prop:S2-S3}
The surfaces $F_0$ and $F_1$ are meridional essential surfaces in $E(K)$.
\end{prop}
\begin{proof}
The surface $F_0$ is essential in $E(K)$ from Lemma \ref{lemma:surfaces}. 
Suppose, by contradiction, that $F_1$ is inessential in $E(K)$. Considering a (boundary) compressing disk for $F_1$ in $E(K)$, and its intersection with $S$, through an innermost curve$\backslash$outermost arc argument, we obtain a (boundary) compressing disk for $F$ in $E_B$ or for $P_D$ in $E_{B'}$, which is a contradiction to $F$ being essential in $E_B$ or $P_D$ being essential in $E_{B'}$.
\end{proof}

For the remaining surfaces, $F_j$ for $j \geq 2$, we will call on branched surfaces. First, we start by revising the definitions and result relevant to this paper from Oertel's work in \cite{O-84}, and also Floyd and Oertel's work in \cite{Floyd-Oertel}.\\
A \textit{branched surface} $R$ with generic branched locus is a compact space locally modeled on Figure \ref{branchedmodel}(a). Hence, a union of finitely many compact smooth surfaces in a $3$-manifold $M$, glued together to form a compact subspace of $M$ respecting the local model, is a branched surface. We denote by $N=N(R)$ a fibered regular neighborhood of $R$ (embedded) in $M$, locally modelled on Figure \ref{branchedmodel}(b). The boundary of $N$ is the union of three compact surfaces $\partial_h N$, $\partial_v N$ and $\partial M\cap \partial N$, where a fiber of $N$ meets $\partial_h N$ transversely at its endpoints and either is disjoint from $\partial_v N$ or meets $\partial_v N$ in a closed interval in its interior. We say that a surface $S$ is \textit{carried} by $R$ if it can be isotoped into $N$ so that it is transverse to the fibers. Furthermore, $S$ is carried by $R$ with \textit{positive weights} if $S$ intersects every fiber of $N$. If we associate a weight $w_i\geq 0$ to each component on the complement of the branch locus in $R$ we say that we have an $\textit{invariant measure}$ provided that the weights satisfy \textit{branch equations} as in Figure \ref{branchedmodel}(c). Given an invariant measure on $R$ we can define a surface carried by $R$, with respect to the number of intersections between the fibers and the surface. We also note that if all weights are positive then the surface carried can be isotoped to be transverse to all fibers of $N$, and hence is carried with positive weights by $R$.

\begin{figure}[htbp]

\labellist
\small \hair 0pt
\pinlabel (a) at 3 -5

\pinlabel (b) at 173 -5

\pinlabel (c) at 335 -5

\pinlabel  $\partial \text{ }N$ at 207 23
\pinlabel \tiny $h$ at 205 21

\pinlabel  $\partial \text{ }N$ at 155 40
\pinlabel \tiny $v$ at 153 37

\pinlabel $w_3=w_2+w_1$ at 383 50
\pinlabel $w_1$ at 407 14
\pinlabel $w_2$ at 391 30
\pinlabel $w_3$ at 368 30

\endlabellist
\centering
\includegraphics[width=0.9\textwidth]{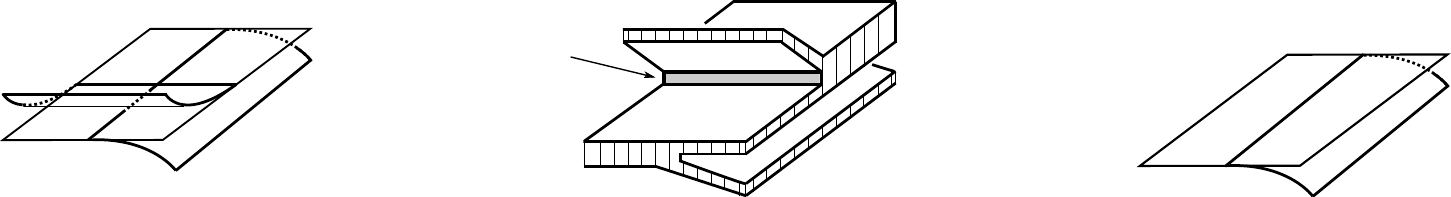}
\caption{: Local model for a branched surface, in (a), and its regular neighborhood, in (b).}
\label{branchedmodel}
\end{figure}

A \textit{disc of contact} is a disc $O$ embedded in $N$ transverse to fibers and with $\partial O\subset \partial_v N$. A \textit{half-disc of contact} is a disc $O$ embedded in $N$ transverse to fibers with $\partial O$ being the union of an arc in $\partial M\cap \partial N$ and an arc in $\partial_v N$. A \textit{monogon} in the closure of $M-N$ is a disc $O$ with $O\cap N=\partial O$ which intersects $\partial_v N$ in a single fiber. (See Figure \ref{monogon}.) We say a branched surface $R$ in M contains a \textit{Reeb component} if $R$ carries a compressible torus or properly embedded annulus, transverse to the fibers of $N$, bounding a solid torus in $M$. (This is a weaker version of the definition of Reeb component in \cite{O-84} by Oertel.)

\begin{figure}[htbp]

\labellist
\small \hair 0pt
\pinlabel (a) at 3 -7

\scriptsize
\pinlabel monogon at 100 30

\small
\pinlabel (b) at 173 -7

\scriptsize
\pinlabel  \text{disk of} at 240 40

\pinlabel \text{contact} at 240 34

\endlabellist
\centering
\includegraphics{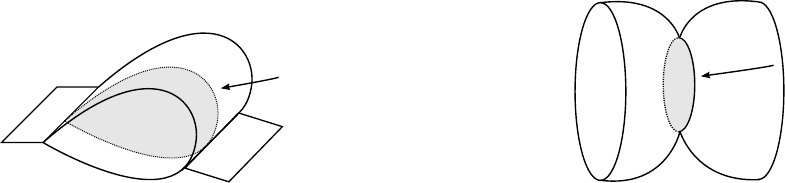}
\caption{: Illustration of a monogon and a disk of contact on a branched surface.}
\label{monogon}
\end{figure}

A branched surface embedded $R$ in $M$ is said \textit{incompressible} if it satisfies the following three properties:
\begin{itemize}
\item[(i)] R has no disk of contact or half-disks of contact;
\item[(ii)] $\partial_h N$ is incompressible and  boundary incompressible in the closure of $M-N$, where a boundary compressing disk is assumed to have boundary defined by an arc in $\partial M$ and an arc in $\partial_h N$;
\item[(iii)] There are no monogons in the closure of $M-N$; 
\end{itemize}

and \textit{without Reeb components} if it satisfies the following property: 

\begin{itemize}
\item[(iv)] B doesn't carry a Reeb component.
\end{itemize}

The following theorem proved by Oertel in \cite{O-84} let us infer if a surface carried by a branched surface is essential. Note that condition (iv), $R$ not carrying a torus or an annulus cutting a solid torus from $M$, implies the non-existence of Reeb components in the sense of Oertel \cite{O-84}.

\begin{thm}[Oertel, \cite{O-84}]\label{Oertel}
If $R$ is an incompressible branched surface without Reeb components (i.e. satisfies (i)-(iv)) and $R$ carries some surface with positive weights then any surface carried by $R$ is essential.
\end{thm}

We now proceed to define the surfaces $F_j$, $j\geq 2$, and prove that they are essential, through a branched surface under the conditions of Theorem \ref{Oertel} that we proceed to construct.\\
Let $s'_2$ be the string of $\mathcal{T}'_2$ connecting an end of $s$ to an end of $s'_1$ in $\mathcal{T}'_1$. (See Figure \ref{figure:hyperbolic knots}.) Assume that the regular  neighborhood of each string in each tangle is strictly contained in $N(K)$. Denote by $e_i$, $i=1, 2$, the boundary components of $F$ in $N(s)$ closer  to $s'_i$, $i=1, 2$, respectively.\\
Denote by $A$ the annulus component of $\partial N(K)-F$ in the boundary of the component of $N(K)-S$ which contains $s$. The boundary components of $A$ bound two disjoint annuli in $F$ with $e_i$, $i=1, 2$, respectively, and denote by $b_i$, $i=1, 2$, the corresponding boundary components of $A$. Denote by $A'$ the annulus component  of $\partial N(K)-F\cup D$ in the boundary of the component of $N(K)-F\cup D$ which contains $s'_2$. We assume that $\partial D$ is  $b_1$, and also that $\partial A'\cap F$ is $b_2$, through an isotopy if necessary. Let $A_D$ and $P'_D$ be the annulus and punctured disk component of $P_D-P_D\cap A'$, respectively. Denote also by $F$ the surface obtained from $F$ by removing the two disjoint annuli bounded by $b_i\cup e_i$, for $i=1, 2$, in $F$.\\
Consider the union of $F$, the annuli $A$, $A'$ and $A_D$, and the punctured disk $P'_D$, and denote the resulting space by $R$.
We smooth the space $R$ on the intersections of the surfaces $F$, $A$, $A'$, $A_D$ and $P'_D$ as explained next. The annuli $A'$ and $A_D$ are smoothed on $P_D\cap A'$ towards $P'_D$. The punctured disk $P'_D$ is smoothed along $b_1$ towards $F$. The annulus  $A$ is smoothed along $b_1$ towards $P'_D$, and is smoothed along $b_2$ towards $F$. The annulus  $A'$ is smoothed along $b_2$ towards $e_2$ and $A$. From the construction, the space $R$ is a branched surface with sections denoted naturally by $F$, $A$, $A'$, $A_D$ and $P'_D$, as illustrated in Figure \ref{figure: branched surface}.

\begin{figure}[htbp]

\labellist
\small \hair 2pt

\scriptsize
\pinlabel $P'_D$ at 310 130
\pinlabel $A'$ at 70 222
\pinlabel $F$ at 144 195
\pinlabel $A$ at 190 219
\pinlabel $A_D$ at 250 17

\pinlabel $S$ at 140 115
\pinlabel $K$ at 100 55

\endlabellist

\centering
\includegraphics[width=0.5\textwidth]{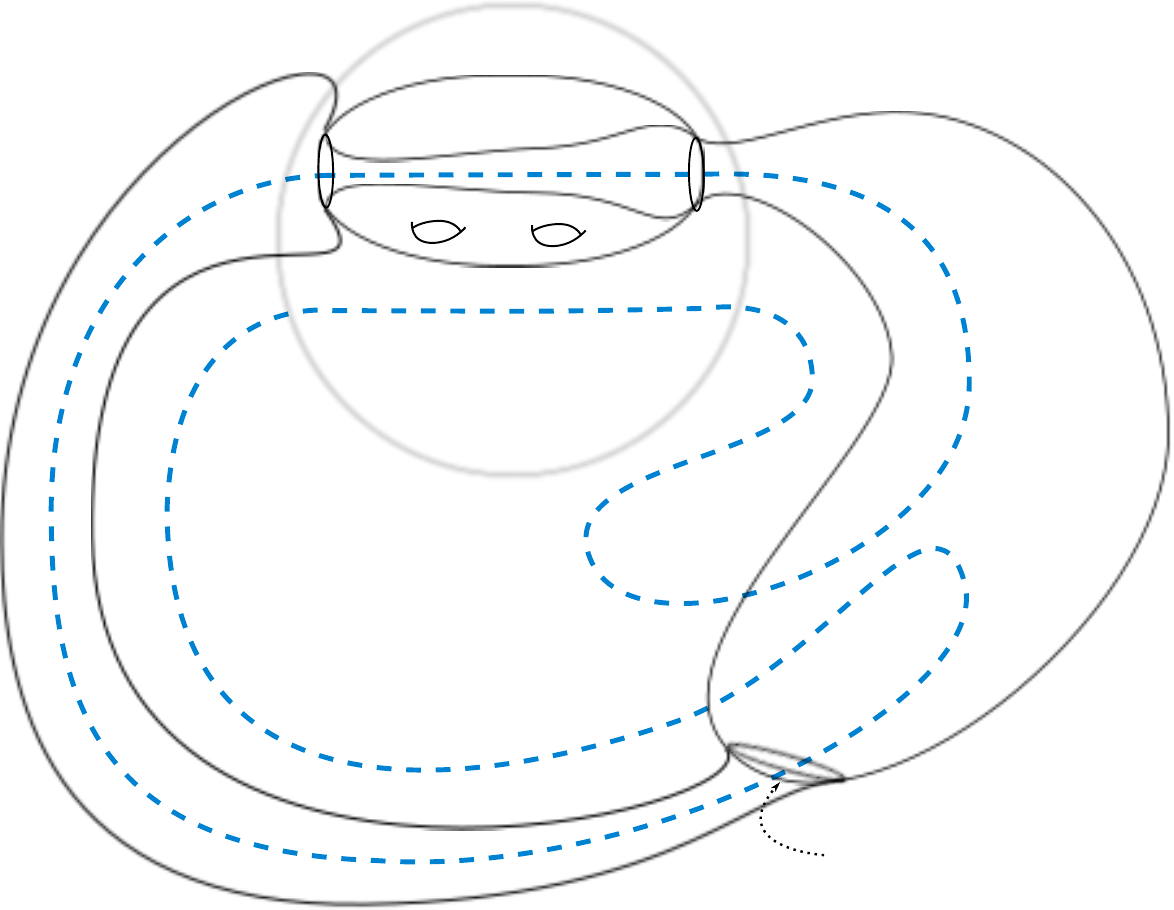}
\caption{: A schematic representation of the branched surface $R$.}
\label{figure: branched surface}
\end{figure}

\begin{prop}\label{proposition:incompressible}
The branched surface $R$ is incompressible and without Reeb components.
\end{prop}
\begin{proof}
First we observe that $R$ doesn't carry a Reeb component. In fact, if $R$ would carry a torus $T$, this torus couldn't be transverse to the regular neighborhood of sections of $R$ with boundary components, or of sections of $R$ of positive genus. In this case, it could be transverse only to regular neighborhoods of sections $A$ and $A'$, which are connected only through the branched locus component corresponding to $b_2$. Hence, $A\cup A'$ cannot carry a closed surface.  If $R$ would carry a properly embedded annulus $O$, this annulus had to be transverse to the regular neighborhood of sections of $R$ with boundary components, that is $F$, $A_D$  or $P'_D$. In case $O$ is transverse to the regular neighborhood of $A_D$ or $F$, as it is properly embedded, it has to be transverse to $P'_D$ as well, but in this case it wouldn't be an annulus, which is a contradiction. If $O$ is transverse to the regular neighborhood of $P'_D$, but not to the one of $A_D$ or $F$, then it is also transverse to the one of $A'$ and $A$, each with weight 1 as $O$ has only two boundary components. But in this case $O$ would be a twice punctured torus, which is a contradiction. Therefore, $R$ doesn't carry an annulus and a torus, and, consequently, it has no Reeb component.																																

Now we prove that $R$ is incompressible in $E(K)$. First observe that there are no (half) disks of contact as no circle on the branched locus of $R$ bounds a disk in $\partial_h N(R)$ and there are no properly embedded arcs on the branched locus of $R$. The space $N(R)$ decomposes $E(K)$ into three components: a component cut from $E(K)$ by $F$ and $A$, denoted $X$; a component cut from $E(K)$ by $A_D$, $P'_D$, $A$ and $A'$, denoted $X_1$; a component cut from $E(K)$ by $F$, $A'$ and $P'_D$, denoted $X_2$.

As $b_1$ corresponds to $\partial_v N(R)$ in $\partial X$, we have that $\partial X\cap \partial_h N(R)$ corresponds to the component cut by $F$ in  $E_B$. Therefore, as $F$ is essential in $E_B$, $\partial X\cap \partial_h N(R)$  is incompressible and boundary incompressible in $X$; and there are no monogons in $X$, otherwise, as $s$ is parallel to $A$, $s$ would be parallel to $F$, contradicting $F$ being essential in $E_B$.

Similarly, $b_2$ corresponds to $\partial_v N(R)$ in $\partial X_2$, and we have that $\partial X_2\cap \partial_h N(R)$ corresponds to $F_1$ in the exterior of $X$ in $E(K)$. From Proposition \ref{prop:S2-S3}, $F_1$ is essential in $E(K)$. Following as in the argument for $X$, we have that $\partial X_2\cap \partial_h N(R)$  is incompressible and boundary incompressible in $X_2$, and there are no monogons in $X_2$.

In the case of $X_1$, $\partial A'\cap A_D$ corresponds to $\partial_v N(R)$ in $\partial X_1$, and $X_1$ corresponds to the exterior of $s'_1\cup t'_1$ in $B'_1$, with $\partial B'_1$ corresponding to two disks singularly glued along $b_1$ corresponding to $\partial_v N(R)$ in $\partial X_1$. Hence, there are no monogons in $X_1$, because any disk intersecting $b_1$ in $\partial X_1$ would have to intersect it at least twice. Also, as $\mathcal{T'}_1$ is essential, $\partial X_1\cap \partial_h N(R)$  is incompressible and boundary incompressible in $X_1$.

Hence, $\partial_h N(R)$ is incompressible and boundary incompressible in $E(K)$, and there are no monogons of $R$ or disks of contact  in $R$. Therefore, $R$ is an incompressible branched surface.
\end{proof}

Let us define $F_2$ as the surface carried by $R$ with invariant measure $W_F=1$, $W_A=1$, $W_{A'}=0$, $W_{A_D}=2$ and $W_{P'_D}=2$. Observe that $F_2$ is a meridional surface in the exterior of $K$ with $2(n+2)$ boundary components: $2n-2$ from $F$, 2 from $A_D$ and 4 from $P'_D$. Denote by $F_j$, $j\geq 3$, the surface carried with positive weights by $R$ with invariant measure $W_F=1$, $W_A=j-1$, $W_{A'}=j-2$, $W_{A_D}=2$ and $W_{P'_D}=j$. Note that 
$F_j$, $j\geq 3$, is a meridional surface in the exterior of $K$ with $2(n+j)$ boundary components: $2n-2$ from $F$, 2 from $A_D$ and $2j$ from $P'_D$.

The surfaces $F_0$ and $F_1$ have genus $g$ and the number of boundary components is $2n$ and $2(n+1)$, respectively. The Euler characteristic of $F_j$, $j\geq 2$, is $W_F\cdot (2-2g -2n)+W_A\cdot 0+W_{A'}\cdot 0 + W_{A_D}\cdot 0 + W_{P'_D}\cdot (-2j)$. That is, $2-2g-2(n+j)$. Hence, as $F_j$ has $2(n+j)$ boundary components, its genus is $g$. We finish the proof of Lemma \ref{lemma: boundaries} with the following proposition.

\begin{prop}\label{proposition:Sg}
The surfaces $F_j$, $j\geq 2$, are essential in $E(K)$.
\end{prop}
\begin{proof}
From Proposition \ref{proposition:incompressible} we have that $R$ is an incompressible branched surface without Reeb components. We also have that the surfaces $F_j$, $j\geq 3$, are carried with positive weights by $R$. Hence, we are under the conditions of Theorem \ref{Oertel}. It then follows that all surfaces carried by $R$ are essential. Therefore, as $F_j$, $j\geq 2$, is carried by $R$, it is essential in $E(K)$.
\end{proof} \end{proof}


\begin{proof}[Proof of Theorem \ref{main}.]
Let us consider a knot $K$ in $\mathcal{K}$. Applying Lemma \ref{lemma: boundaries} with $F$ the 4-punctured sphere $S -N(s\cup t)$ we have the statement of Theorem \ref{main}(a).\\ 
Let $F$ be  a surface of any positive genus and two boundary components as in Lemma \ref{lemma:surfaces}. As $F$ is separating in $B$ and has exactly two boundary components, the boundary components have to be on the regular neighborhood of the same string, which we assume to be $s$. Hence, applying Lemma \ref{lemma: boundaries} with $F$, we have the statement of Theorem \ref{main}(b).  From Lemma \ref{lemma:infinite}, we have that $\mathcal{K}$ is an infinite collection of hyperbolic knots. 
\end{proof}


\section{Appendix}\label{appendix}

In this appendix we observe the existence of n-string atoroidal essential tangles $\mathcal{T}_n$.\\
For a string $s$ in a ball $B$ we can consider the knot obtained by capping off $s$ along $\partial B$, that is by gluing to $s$ an arc in $\partial B$ along the respective boundaries. We denote this knot by $K(s)$. The string $s$ is said to be \textit{knotted} if the knot $K(s)$ is not trivial.\\
Let $s$ be an arc in a ball $B$ such that $K(s)$ is a trefoil, and consider also an unknotting tunnel $t$ for $K(s)$, as in Figure \ref{FTangles1}.

\begin{figure}
\labellist
\small \hair 2pt
\pinlabel (a) at 10 0
\scriptsize
\pinlabel $s$ at 62 18
\pinlabel $B$ at 85 30

\small
\pinlabel (b) at 180 0
\scriptsize
\pinlabel $s$ at 232 18
\pinlabel $t$ at 205 48
\pinlabel $B$ at 255 30

\endlabellist

\centering
\includegraphics[width=0.6\textwidth]{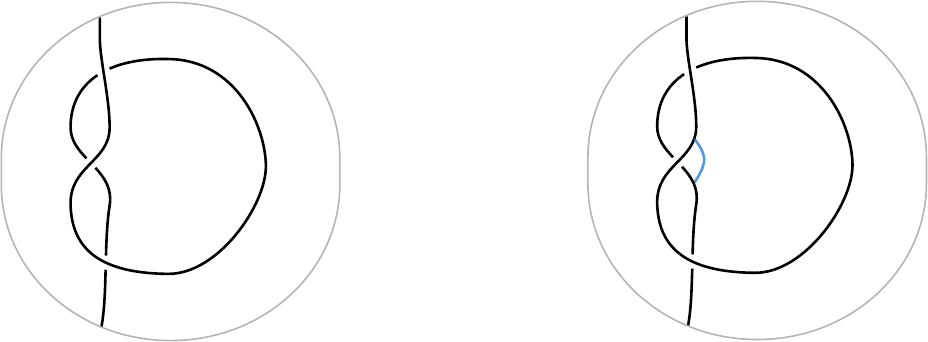}
\caption{: The string $s$ when capped off along $\partial B$ is a trefoil knot with an unknotting tunnel $t$.}
\label{FTangles1}
\end{figure}

\noindent If we slide $\partial t$ along $s$ into $\partial B$, as illustrated in Figure \ref{FTangles2}(a), we get a new string that we denote by $t_1$, as in Figure \ref{FTangles2}(b).

\begin{figure}[htbp]
\labellist
\small \hair 2pt
\pinlabel (a) at 10 0
\scriptsize
\pinlabel $s$ at 62 18
\pinlabel $B$ at 85 30

\small
\pinlabel (b) at 170 0
\scriptsize
\pinlabel $s$ at 215 17
\pinlabel $t_1$ at 213 48
\pinlabel $B$ at 245 30

\small
\pinlabel (c) at 320 0
\scriptsize
\pinlabel $t_1$ at 350 48
\pinlabel $...$ at 360 48
\pinlabel $t_{n-1}$ at 380 48
\pinlabel $s$ at 375 22
\pinlabel $B$ at 395 30

\endlabellist

\centering
\includegraphics[width=0.9\textwidth]{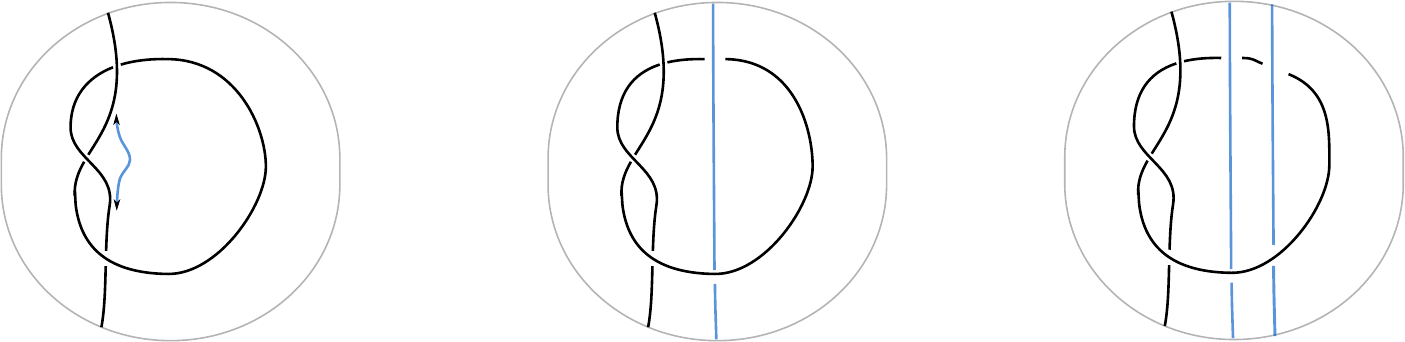}
\caption{: Construction of a $n$-string essential free tangle from $s$ and the unknotting tunnel $t$.}
\label{FTangles2}
\end{figure}

\noindent  The tangle $(B, s\cup t_1)$, denoted $\mathcal{T}_2$, is free by construction. In fact, as $t$ is an unknotting tunnel of $K(s)$, the complement of $N(s)\cup N(t)$ in $B$ is a handlebody. Henceforth, by an ambient isotopy, the complement of $N(s)\cup N(t)$ is also a handlebody. The tangle $\mathcal{T}_3=(B, s\cup t_1\cup t_2)$ is obtained from $\mathcal{T}_2$ by adding a string $t_2$ parallel to $t_1$ in $B-s$. This operation corresponds to a stabilization of the Heegaard splitting of $S^3$ defined by $\partial N(B - s\cup t_1)$. Hence, the exterior of the strings of $\mathcal{T}_3$ in $B$ is a handlebody. That is $\mathcal{T}_3$ is a free tangle. Similarly we define a $n$-string  free tangle $\mathcal{T}_n$ by considering $n-2$ properly embedded arcs $t_2, \ldots, t_{n-1}$ parallel to $t_1$ in $B-s$.

The tangle $\mathcal{T}_2$ is essential. In fact, as $\mathcal{T}$ is free, if there is a disk in $B$ separating $s$ and $t_1$, then both arcs $s$ and $t_1$ are unknotted, which contradicts $s$ being a knotted arc.

The tangle $\mathcal{T}_n$, for $n\in \mathbb{N}_{\geq 3}$ is also essential. In fact, the tangle $(B, s\cup t_i)$ is homeomorphic to $\mathcal{T}$ and henceforth it is essential. Then, if there is a disk $D$ properly embedded in $B$ separating the strings $s\cup t_1\cup t_2$, particular it separates $s$ from $t_i$ for some $i\in \{1, 2\}$, contradicting $(B, s\cup t_i)$ being essential.  


\section*{Acknowledgement}

The author would like to thank John Luecke for discussions on this paper, and Tao Li for some clarifications on branched surface theory.


\begin{thebibliography}{99}
\bibitem{CGLS}
M. Culler, C. Gordon, J. Luecke, P. Shalen, Dehn surgery on knots,
Ann. Math. 125 (1987), pp. 237-300.
\bibitem{EN-04} M. Eudave-Muñoz, M. Neumann-Coto, Acylindrical surfaces in 3-manifolds and knot complements, Bol. Soc. Mat. Mexicana 3 No. 10 (2004), pp. 147–169. 
\bibitem{Floyd-Oertel} W. Floyd, U. Oertel, Incompressible surface via branched surfaces, Topology 23 (1984), pp. 117-125.
\bibitem{CG-87} A. Casson, C. Gordon, Reducing Heegaard splittings, Topology Appl. 27 (1987), 275–283.
\bibitem{GL-89} C. Gordon, J. Luecke, Knots are determined by their complements, J. Amer. Math. Soc. 2. no. 2 (1989), pp. 371-415.
\bibitem{Gordon-Reid}
C. McA. Gordon and A. W. Reid, Tangle decompositions of tunnel number one knots and links, J. Knot Theory and its Ramifications 4 No. 3 (1995), pp. 389-409.
\bibitem{G-94} R. F. Gustafson, Closed incompressible surfaces of arbitrarily high genus in the complements of certain star knots, Rocky Mountain J. Math. 24 (1994), pp. 539–547.
\bibitem{H-95} J. Hass, Acylindrical Surfaces in 3-manifolds, Michigan Math. J.  42 no. 2 (1995), pp. 357-365.
\bibitem{Jaco} W. Jaco, Lectures on Three-Manifold Topology, CBMS Regional Conference Series in Mathematics 43 (1980), Amer. Math. Soc., 251 pp.
\bibitem{JO-84} W. Jaco, U. Oertel, An algorithm to decide if a 3-manifold is a Haken manifold, Topology 23 (1984), pp. 195–209.
\bibitem{KnotInfo} C. Livingston and A. H. Moore, KnotInfo: Table of Knot Invariants, http://www.indiana.edu/~knotinfo,  May 4, 2020.
\bibitem{Li} Y. Li, $2$-string free tangles and incompressible surfaces,
J. Knot Theory Ramifications 18 No. 8 (2009), 1081-1087.
\bibitem{L-15} E. Lopez-Garcia, Knots that admit infinitely many closed essential surfaces, J. Knot Theory Ramifications 24 No. 7 (2015), 1550037, 11 pp.
\bibitem{Lyon}
H. C. Lyon, Incompressible surfaces in knot spaces,
Trans. Amer. Math. Soc. 157 (1971), pp. 53-62.
\bibitem{MO-98}
H. Matsuda, M. Ozawa, Free genus one knots do not admit essential tangle decompositions,
J. Knot Theory and its Ramifications 7 No. 7 (1998), pp. 945-953.
\bibitem{M-84} W. Menasco, Closed incompressible surfaces in alternating knot and link complements, Topology 23  no. 1 (1984), pp. 37-44.
\bibitem{MT-08}
Y. Mizuma and Y. Tsutsumi, Crosscap number, ribbon number and  essential tangle decompositions of knots,
Osaka J. of Math. 45 (2008), pp. 391-401.
\bibitem{NS-14} J. M. Nogueira, A. Salgueiro, \textit{The minimum crossing number of essential tangles}, J. Knot Theory and its Ramifications 23 No. 10 (2014), 1450054, 15 pp.
\bibitem{N-15} 
J. M. Nogueira, Prime knot complements with meridional essential surfaces of arbitrarily high genus, Topology and its Applications 194 (2015), pp. 427-439.
\bibitem{N-16} ---------, The number of strings on essential tangle decompositions of a knot can be unbounded, Algebraic and Geometric Topology 16 (2016), pp. 2535-2548.
\bibitem{N-18} 
---------, Knot exteriors with all possible meridional essential surfaces, Israel Journal of Mathematics  225 (2018),pp.  909–924.
\bibitem{O-84} U. Oertel, Incompressible branched surfaces,
Invent. Math 76 (1984), pp. 385-410.
\bibitem{Oertel}
U. Oertel, Closed incompressible surfaces in complements of star links,
Pacific J. Math. 111 (1984), pp. 209-230.
\bibitem{O-02}
U. Oertel, On the existence of infinitely many essential surfaces of bounded genus,
Pac. J. Math. 202 No. 2 (2002), pp. 449-458.
\bibitem{Oz-98} M. Ozawa, On uniqueness of essential tangle decompositions of knots with free tangle de- compositions, in Proc. Appl. Math. Workshop 8, ed. G. T. Jin and K. H. Ko, KAIST, Taejon (1998), pp.  227–232.
\bibitem{OT-03} M. Ozawa, Y. Tsutsumi , Totally knotted Seifert surfaces with accidental peripherals, Proc. Amer. Math. Soc. 131 No. 12 (2003), pp. 3945-3954.
\bibitem{S-83} T. Soma, Simple links and tangles, Tokyo J. Math. 6 no. 1 (1983), pp. 65-73.

\end{thebibliography}
\end{document}